\providecommand{\abs}[1]{\left\lvert#1 \right\rvert}
\newtheorem{theorem}{Theorem}
\newtheorem{corollary}{Corollary}
\newenvironment{proof}
{\begin{trivlist}\item[\hskip%
\labelsep{{\it \noindent Proof.}}]}{\hfill $\square$
\end{trivlist}}
\newcounter{counter}
\newcommand{\counter}{\stepcounter{counter}\thecounter}
\newenvironment{remark}
{\begin{trivlist}\item[\hskip%
\labelsep{{\it \noindent Remark \counter}}]}{\hfill
\end{trivlist}}
\numberwithin{equation}{section}
\begin{document}
\begin{center}
{\huge {\bf A maximal inequality for \\
dependent random variables}} \\
\vspace{1.0cm}
{\Large Jo\~{a}o Lita da Silva\footnote{\textit{E-mail address:} \texttt{jfls@fct.unl.pt}; \texttt{joao.lita@gmail.com} (corresponding author)}} \\
\vspace{0.1cm}
\textit{Department of Mathematics and GeoBioTec \\ NOVA School of Sciences and Technology \\
NOVA University of Lisbon \\ Quinta da Torre, 2829-516 Caparica,
Portugal}
\end{center}

\vspace{1.5cm}

\begin{abstract}
For a sequence $\{X_{n}, \, n \geqslant 1 \}$ of random variables satisfying $\mathbb{E} \lvert X_{n} \rvert < \infty$ for all $n \geqslant 1$, a maximal inequality is established, and used to obtain strong law of large numbers for dependent random variables.
\end{abstract}

\bigskip

{\textit{Key words and phrases:} Maximal inequality, dependent random variables, strong law of large numbers}

\bigskip

{\small{\textit{2010 Mathematics Subject Classification:} 60F15}}

\bigskip

\section{Introduction}

\indent

The most notorious maximal inequality in Probability Theory is, perhaps, the Kolmogorov inequality \cite{Kolmogoroff28} published in 1928: if $\left\{X_{n}, \, n \geqslant 1 \right\}$ is a sequence of (mutually) independent random variables with mean values
$\mathbb{E} \, X_{n} = 0$ and %finite variances
variances $\mathbb{V}(X_{n}) < \infty$ for each $n \geqslant 1$ then, for any $\varepsilon > 0$,
\begin{equation*}
\mathbb{P} \left\{\max_{1 \leqslant k \leqslant n} \lvert X_{1} + X_{2} + \ldots + X_{k} \rvert > \varepsilon \right\} \leqslant \frac{1}{\varepsilon^{2}} \sum_{k=1}^{n} \mathbb{V}(X_{k}).
\end{equation*}
Since then, several authors have extended Kolmogorov's inequality in many ways, namely by weakening the (mutual) independence assumption of the random variables (see \cite{Bhattacharya05}, \cite{Chow60}, \cite{Christofides00}, \cite{Eghbal10}, \cite{Hajek55}, \cite{Kounias69}, \cite{Matula92}, \cite{Newman82}, \cite{Rao02}, \cite{Rio95b}, \cite{Shao95} among others).

It is commonly known that the significant role of maximal inequalities stems from the fact that these are a cornerstone in the obtention of sharp limit theorems.
Recently, it was proved in \cite{Thanh20} the Baum-Katz theorem \cite{Baum65} for sequences of pairwise independent and identically distributed random variables under optimal moment conditions, by using a technique developed in \cite{Rio95a}. The key ingredient for this achievement is an asymptotic maximal inequality which takes a particular format on the scenario of pairwise independence and identical distribution admitted for the random sequence. In this paper, the aforementioned asymptotic maximal inequality is brought to light in a general setting, i.e. the random variables $X_{1}, X_{2}, \ldots$ are not assumed to be independent, nor having any particular dependence structure. As consequence, Strong Law of Large Numbers (SLLN) with respect to optimal norming constants can be obtained for sequences of dependent random variables. Over the last decades, many stochastic models involving dependent random variables have emerged in mathematical statistics, statistical physics or reliability theory (see \cite{Bulinski07} and \cite{Hutchinson90}), whence the importance of having statements (particularly, SLLN) that allow us to deal with it.

We shall need to introduce some relevant notations which will be employed along this paper. Associated to a probability space $(\Omega, \mathcal{F}, \mathbb{P})$, we shall consider the space $\mathscr{L}_{p}$ $(p > 0)$ of all measurable functions $X$ (necessarily random variables) for which $\mathbb{E} \abs{X}^{p} < \infty$. We shall write $x \wedge y$ and $x \vee y$ for $\min\{x,y\}$ and $\max\{x,y\}$, respectively. For any measurable function $X$, we shall define its positive and negative parts by $X^{+} = 0 \vee X$ and $X^{-} = 0 \vee (-X)$, respectively. Given an event $A$, we shall denote the indicator random variable of the event $A$ by $I_{A}$. Throughout, $\lfloor x \rfloor$ shall be used to represent the largest integer not greater than $x$. To make the computations be simpler looking, we shall employ the letter $C$ to denote any positive constant that can be explicitly computed and whose value may change between appearances; the symbol $C(r)$ has identical meaning with the additional information that the constant depends on $r$.

\section{Maximal inequality}

\indent

Let $L,K$ be real numbers such that $0 < L \leqslant K$. We shall define the function $g_{L}(t) = (t \wedge L) \vee (-L)$, which describes the truncation at level $L$, and
\begin{equation*}
h_{L,K}(t) = \lvert g_{K}(t) - g_{L}(t) \rvert = 0 \vee [(K - L) \wedge (t - L)] - 0 \wedge [(L - K) \vee (t + L)].
\end{equation*}
Considering two random variables $X,Y$ we shall put
\begin{equation*}
\Delta_{X,Y}(x,y) := \mathbb{P} \left\{X \leqslant x, Y \leqslant y \right\} - \mathbb{P} \left\{X \leqslant x \right\} \mathbb{P} \left\{Y \leqslant y \right\}
\end{equation*}
and define the following covariance quantities
\begin{align}
G_{X,Y}(L) &:= \mathrm{Cov} \big(g_{L}(X),g_{L}(Y) \big) = \int_{-L}^{L} \int_{-L}^{L} \Delta_{X,Y}(u,v) \, \mathrm{d}u \, \mathrm{d}v, \label{eq:2.1} \\[10pt]
\begin{split}\label{eq:2.2}
H_{X,Y}(L,K) &:= \mathrm{Cov} \big(h_{L,K}(X),h_{L,K}(Y) \big) \\
&= \int_{L}^{K} \int_{L}^{K} \Delta_{X,Y}(u,v) \, \mathrm{d}u \, \mathrm{d}v + \int_{-K}^{-L} \int_{-K}^{-L} \Delta_{X,Y}(u,v) \, \mathrm{d}u \, \mathrm{d}v \\
&\quad - \int_{L}^{K} \int_{-K}^{-L} \Delta_{X,Y}(u,v) \, \mathrm{d}u \, \mathrm{d}v - \int_{-K}^{-L} \int_{L}^{K} \Delta_{X,Y}(u,v) \, \mathrm{d}u \, \mathrm{d}v
\end{split}
\end{align}
where the last identities in \eqref{eq:2.1} and \eqref{eq:2.2} are consequence of Theorem 2.3 in \cite{Yu93}. It should be noted that if $X,Y$ are independent random variables then $\Delta_{X,Y}(u,v) = 0$ for any reals $u,v$.

The main result is given in the next:

\begin{theorem}\label{thr:2.1}
Let $r > 1$ be an integer and $\left\{X_{n}, \, n \geqslant 1 \right\}$ a sequence of random variables verifying $\mathbb{E} \lvert X_{n} \rvert < \infty$ for all $n$. If $\{b_{n} \}$ is nondecreasing sequence of positive constants, and $\{a_{n,k}, \, 1 \leqslant k \leqslant n + 1, n \geqslant 1 \}$ is an array of positive constants satisfying \\

\noindent \textnormal{(a)} ${\displaystyle \sum_{k=1}^{n+1} a_{n,k} = O(b_{r^{n}})}$ as $n \rightarrow \infty$, \\

\noindent \textnormal{(b)} ${\displaystyle \sum_{k=1}^{n+1} \max_{0 \leqslant h < r^{n-k+1}} \sum_{j=1 + h r^{k}}^{h r^{k} + r^{k}} \mathbb{E} \lvert X_{j} \rvert I_{\left\{\lvert X_{j} \rvert > b_{r^{k-1}} \right\}} = o(b_{r^{n}})}$ as $n \rightarrow \infty$, \\

\noindent then, for any $\varepsilon > 0$, there is a positive integer $n_{0}$ and a constant $C(r) > 0$ such that
\begin{equation*}
\begin{split}
    &\mathbb{P} \left\{\max_{1 \leqslant m < r^{n+1}} \Bigg\lvert\sum_{k=1}^{m} (X_{k} - \mathbb{E} \, X_{k}) \Bigg\rvert > \varepsilon b_{r^{n}} \right\} \\
    &\quad \leqslant \sum_{k=1}^{r^{n+1} - 1} \mathbb{P} \left\{\lvert X_{k} \rvert > b_{r^{n+1}} \right\} + \frac{C(r)}{\varepsilon^{2}} \sum_{k=1}^{n+1} \sum_{j=1}^{r^{n+1}} a_{n,k}^{-2} \big(\mathbb{E} X_{j}^{2} I_{\left\{\lvert X_{j} \rvert \leqslant b_{r^{k}} \right\}} + b_{r^{k}}^{2} \mathbb{P} \big\{\lvert X_{j} \rvert > b_{r^{k-1}} \big\} \big) \\
    &\qquad + \frac{C(r)}{\varepsilon^{2}} \sum_{k=1}^{n+1} a_{n,k}^{-2} \max_{1 \leqslant \ell \leqslant r-1} \sum_{h=0}^{r^{n-k+1} - 1} \left[\sum_{1 + hr^{k} \leqslant i < j \leqslant h r^{k} + \ell r^{k-1}}G_{X_{i},X_{j}}(b_{r^{k-1}}) \right]^{+} \\
    &\qquad + \frac{C(r)}{\varepsilon^{2}} \sum_{k=1}^{n+1} a_{n,k}^{-2} \sum_{h=0}^{r^{n-k+1} - 1} \left[\sum_{1 + hr^{k} \leqslant i < j \leqslant h r^{k} + r^{k}} H_{X_{i},X_{j}}(b_{r^{k-1}},b_{r^{k}}) \right]^{+}
\end{split}
\end{equation*}
for all $n \geqslant n_{0}$.
\end{theorem}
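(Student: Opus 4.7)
My plan is to combine a multi-level truncation of each $X_j$ at the thresholds $b_{r^0},b_{r^1},\ldots,b_{r^{n+1}}$ with an $r$-adic chaining of the partial sums, applying Chebyshev scale by scale after distributing the tolerance $\varepsilon b_{r^n}$ among the $n{+}1$ scales proportionally to $a_{n,k}$. Condition (a), which gives $\sum_k a_{n,k}=O(b_{r^n})$, is what legitimises this distribution and produces the factor $a_{n,k}^{-2}$ after Chebyshev. The first step is to replace each $X_j$ by its truncation $g_{b_{r^{n+1}}}(X_j)$: the discarded part $X_j-g_{b_{r^{n+1}}}(X_j)$ vanishes outside $\{|X_j|>b_{r^{n+1}}\}$, which generates the first term $\sum_{k=1}^{r^{n+1}-1}\mathbb{P}\{|X_k|>b_{r^{n+1}}\}$ of the bound after a union bound, and the induced mean shift $\sum_{j<r^{n+1}}\mathbb{E}|X_j|I_{\{|X_j|>b_{r^{n+1}}\}}$ is $o(b_{r^n})$ by condition (b) (term $k=n+1$), hence absorbable into $\varepsilon b_{r^n}$ for $n\ge n_0$.

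For the chaining, I expand $m<r^{n+1}$ in base $r$ as $m=\sum_{i=0}^{n}\epsilon_i r^i$; this decomposes $\{1,\ldots,m\}$ into $n{+}1$ consecutive segments, the $i$-th consisting of $\epsilon_i\in\{0,\ldots,r-1\}$ complete blocks of length $r^i$ starting at a multiple of $r^{i+1}$. Re-indexing $k=i+1$ and writing $T_{k,h,\ell}:=\sum_{j=hr^k+1}^{hr^k+\ell r^{k-1}}(X_j-\mathbb{E}X_j)$, this yields
\[
\max_{m<r^{n+1}}\Bigl|\sum_{j=1}^{m}(X_j-\mathbb{E}X_j)\Bigr|\ \le\ \sum_{k=1}^{n+1}\max_{0\le h<r^{n-k+1}}\max_{1\le\ell\le r-1}\bigl|T_{k,h,\ell}\bigr|,
\]
whose block parameters $k,h,\ell$ match those in the last three lines of the target estimate. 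Distributing $\varepsilon b_{r^n}$ in proportion to $a_{n,k}$ and invoking a union bound over $k$ reduces the proof to bounding, for each scale $k$, the probability that the inner double maximum exceeds a constant multiple of $a_{n,k}$.

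For fixed $k$ I then decompose every $X_j=g_{b_{r^{k-1}}}(X_j)+\bigl[g_{b_{r^k}}(X_j)-g_{b_{r^{k-1}}}(X_j)\bigr]+\bigl[X_j-g_{b_{r^k}}(X_j)\bigr]$, substitute into $T_{k,h,\ell}$, and apply Chebyshev together with a standard maximal inequality over the $r-1$ values of $\ell$. The first piece is bounded by $b_{r^{k-1}}$; its cross-covariances, via bilinearity and the integral representation \eqref{eq:2.1}, are precisely $G_{X_i,X_j}(b_{r^{k-1}})$ summed over the partial-prefix index sets of length $\ell r^{k-1}$, producing the third line of the bound after $\sum_h$ and $\max_\ell$. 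The second (shell) piece, bounded by $b_{r^k}$ and vanishing on $\{|X_j|\le b_{r^{k-1}}\}$, satisfies $|g_{b_{r^k}}(X_j)-g_{b_{r^{k-1}}}(X_j)|=h_{b_{r^{k-1}},b_{r^k}}(X_j)$; splitting it into positive and negative parts and bounding partial-prefix sums by full-block sums of the nonnegative $h_{b_{r^{k-1}},b_{r^k}}(X_j)$ produces, via \eqref{eq:2.2}, the $H_{X_i,X_j}(b_{r^{k-1}},b_{r^k})$ terms over full $r^k$-blocks, giving the fourth line. The diagonal variance of both bounded pieces is majorised by $\mathbb{E}X_j^{2}I_{\{|X_j|\le b_{r^k}\}}+b_{r^k}^{2}\mathbb{P}\{|X_j|>b_{r^{k-1}}\}$, which after summation yields the second line. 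The far-tail remainder $X_j-g_{b_{r^k}}(X_j)$ at scales $k<n+1$ telescopes into the shells of higher scales, and at $k=n+1$ it is absorbed by the top-level truncation step and by (b); taking positive parts $[\,\cdot\,]^{+}$ is legitimate throughout since Chebyshev needs only an upper bound on the variance.

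The main technical obstacle is the combinatorial bookkeeping linking the $r$-adic chaining to the two-level truncation at each scale, together with the fact that the covariance of the \emph{signed} shell piece $g_{b_{r^k}}-g_{b_{r^{k-1}}}$ is not literally $H_{X_i,X_j}(b_{r^{k-1}},b_{r^k})$ (which is the covariance of the \emph{unsigned} $h_{b_{r^{k-1}},b_{r^k}}$) but must be dominated by it, after splitting into positive and negative parts, in order to match the fourth line of the bound. Reconciling these pieces so that the final estimate displays only $G$ and $H$ with the correct arguments $b_{r^{k-1}}$ and $b_{r^k}$, and with a universal constant $C(r)$ independent of $n$, is where the proof concentrates its effort.
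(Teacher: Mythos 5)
Your proposal follows essentially the same route as the paper: top-level truncation at $b_{r^{n+1}}$ with a union bound for the first term, an $r$-adic chaining of $m$ (your base-$r$ digit expansion is exactly the paper's telescoping $S_{r^{k-1}\lfloor m/r^{k-1}\rfloor,\,b_{r^{k-1}}}-S_{r^{k}\lfloor m/r^{k}\rfloor,\,b_{r^{k-1}}}$, since $\epsilon_{k-1}=\lfloor m/r^{k-1}\rfloor-r\lfloor m/r^{k}\rfloor$), distribution of $\varepsilon b_{r^{n}}$ across scales via (a), and Chebyshev at each scale with the signed shell $g_{b_{r^{k}}}-g_{b_{r^{k-1}}}$ replaced by the unsigned $h_{b_{r^{k-1}},b_{r^{k}}}$ at the cost of an expectation term absorbed by (b), yielding the $G$, $H$ and diagonal terms exactly as in the paper. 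The only caveats are presentational: the cross-scale regrouping of the far-tail pieces must be performed for fixed $m$ before the maxima over $h,\ell$ are taken, and the passage to $H=\mathrm{Cov}(h,h)$ is cleaner via $\lvert f-\mathbb{E}f\rvert\leqslant (h-\mathbb{E}h)+2\,\mathbb{E}h$ (as the paper does) than via separate positive and negative parts.
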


\begin{proof}
Let $r > 1$ be a (fixed) integer. Setting
\begin{gather*}
X_{k,b_{r^{n+1}}}' := g_{b_{r^{n+1}}}(X_{k}) = X_{k} I_{\left\{\lvert X_{k} \rvert \leqslant b_{r^{n+1}} \right\}} + b_{r^{n+1}} I_{\left\{X_{k} > b_{r^{n+1}} \right\}} - b_{r^{n+1}} I_{\left\{X_{k} < - b_{r^{n+1}} \right\}}, \\
\Gamma_{n} := \bigcap_{k=1}^{r^{n+1} - 1} \{X_{k,b_{r^{n+1}}}' = X_{k} \}, \\
S_{m,b_{r^{n+1}}} := \sum_{k=1}^{m} (X_{k,b_{r^{n+1}}}' - \mathbb{E} X_{k,b_{r^{n+1}}}'), \qquad \quad n,m \geqslant 0
\end{gather*}
we obtain
\begin{align*}
&\mathbb{P} \left\{\max_{1 \leqslant m < r^{n+1}} \Bigg\lvert \sum_{j=1}^{m} (X_{j} - \mathbb{E} X_{j}) \Bigg\rvert > \varepsilon b_{r^{n}} \right\} \\
&\quad = \mathbb{P} \left[\left\{\max_{1 \leqslant m < r^{n+1}} \Bigg\lvert \sum_{j=1}^{m} (X_{j} - \mathbb{E} X_{j}) \Bigg\rvert > \varepsilon b_{r^{n}} \right\} \cap \Gamma_{n} \right] \\
&\qquad + \mathbb{P} \left[\left\{\max_{1 \leqslant m < r^{n+1}} \Bigg\lvert \sum_{j=1}^{m} (X_{j} - \mathbb{E} X_{j}) \Bigg\rvert > \varepsilon b_{r^{n}} \right\} \cap \Gamma_{n}^{\complement} \right] \\
&\quad \leqslant \mathbb{P} \left\{\max_{1 \leqslant m < r^{n+1}} \lvert S_{m,b_{r^{n+1}}} \rvert > \varepsilon b_{r^{n}} \right\} + \sum_{k=1}^{r^{n+1} - 1} \mathbb{P} \left\{\lvert X_{k} \rvert > b_{r^{n+1}} \right\}.
\end{align*}
Moreover, for each $1 \leqslant m < r^{n+1}$, we have
\begin{equation}\label{eq:2.3}
\begin{split}
S_{m,b_{r^{n+1}}} &= S_{m,b_{1}} + \sum_{k=1}^{n+1} (S_{m,b_{r^{k}}} - S_{m,b_{r^{k-1}}}) \\
&= (\overset{= S_{m,b_{1}}}{\overbrace{S_{r^{0} \lfloor \frac{m}{r^{0}} \rfloor,b_{r^{0}}}}} - \underset{= S_{0,b_{r^{n+1}}} = 0}{\underbrace{S_{r^{n+1} \lfloor \frac{m}{r^{n+1}} \rfloor,b_{r^{n+1}}}}}) + \sum_{k=1}^{n+1} (S_{m,b_{r^{k}}} - S_{m,b_{r^{k-1}}}) \\
&= \sum_{k=1}^{n+1} (S_{r^{k-1} \lfloor \frac{m}{r^{k-1}} \rfloor,b_{r^{k-1}}} - S_{r^{k} \lfloor \frac{m}{r^{k}} \rfloor,b_{r^{k}}}) + \sum_{k=1}^{n+1} (S_{m,b_{r^{k}}} - S_{m,b_{r^{k-1}}}) \\
&= \sum_{k=1}^{n+1} (S_{r^{k-1} \lfloor \frac{m}{r^{k-1}} \rfloor,b_{r^{k-1}}} - S_{r^{k} \lfloor \frac{m}{r^{k}} \rfloor,b_{r^{k-1}}}) \\
&\qquad + \sum_{k=1}^{n+1} (S_{m,b_{r^{k}}} - S_{m,b_{r^{k-1}}} - S_{r^{k} \lfloor \frac{m}{r^{k}} \rfloor,b_{r^{k}}} + S_{r^{k} \lfloor \frac{m}{r^{k}} \rfloor,b_{r^{k-1}}}).
\end{split}
\end{equation}
Further,
\begin{equation}\label{eq:2.4}
S_{m,b_{r^{k}}} - S_{m,b_{r^{k-1}}} = \sum_{j=1}^{m} [(X_{j,b_{r^{k}}}' - X_{j,b_{r^{k-1}}}') - \mathbb{E} (X_{j,b_{r^{k}}}' - X_{j,b_{r^{k-1}}}')]
\end{equation}
and
\begin{equation}\label{eq:2.5}
S_{r^{k} \lfloor \frac{m}{r^{k}} \rfloor,b_{r^{k}}} - S_{r^{k} \lfloor \frac{m}{r^{k}} \rfloor,b_{r^{k-1}}} = \sum_{j=1}^{r^{k} \lfloor \frac{m}{r^{k}} \rfloor} [(X_{j,b_{r^{k}}}' - X_{j,b_{r^{k-1}}}') - \mathbb{E} (X_{j,b_{r^{k}}}' - X_{j,b_{r^{k-1}}}')].
\end{equation}
Recall that
\begin{equation}\label{eq:2.6}
r^{k} \left\lfloor \frac{m}{r^{k}} \right\rfloor \leqslant m < r^{k} \left\lfloor \frac{m}{r^{k}} \right\rfloor + r^{k}, \qquad 1 \leqslant k \leqslant n + 1
\end{equation}
because $\lfloor t \rfloor \leqslant t < \lfloor t \rfloor + 1$ for any real number $t$ (see \cite{Epp20}). Thereby, \eqref{eq:2.4}, \eqref{eq:2.5} and \eqref{eq:2.6} entail
\begin{align}
&\lvert S_{m,b_{r^{k}}} - S_{m,b_{r^{k-1}}} - S_{r^{k} \lfloor \frac{m}{r^{k}} \rfloor,b_{r^{k}}} + S_{r^{k} \lfloor \frac{m}{r^{k}} \rfloor,b_{r^{k-1}}} \rvert \notag \\
&\quad = \Bigg\lvert\sum_{j=1 + r^{k} \lfloor \frac{m}{r^{k}} \rfloor}^{m} [(X_{j,b_{r^{k}}}' - X_{j,b_{r^{k-1}}}') - \mathbb{E} (X_{j,b_{r^{k}}}' - X_{j,b_{r^{k-1}}}')] \Bigg\rvert \notag \\
&\quad \leqslant \sum_{j=1 + r^{k} \lfloor \frac{m}{r^{k}} \rfloor}^{m} \lvert (X_{j,b_{r^{k}}}' - X_{j,b_{r^{k-1}}}') - \mathbb{E} (X_{j,b_{r^{k}}}' - X_{j,b_{r^{k-1}}}') \rvert \notag \\
&\quad \leqslant \sum_{j=1 + r^{k} \lfloor \frac{m}{r^{k}} \rfloor}^{r^{k} \lfloor \frac{m}{r^{k}} \rfloor + r^{k}} \lvert (X_{j,b_{r^{k}}}' - X_{j,b_{r^{k-1}}}') - \mathbb{E} (X_{j,b_{r^{k}}}' - X_{j,b_{r^{k-1}}}') \rvert \label{eq:2.7} \\
&\quad \leqslant \sum_{j=1 + r^{k} \lfloor \frac{m}{r^{k}} \rfloor}^{r^{k} \lfloor \frac{m}{r^{k}} \rfloor + r^{k}} (\lvert X_{j,b_{r^{k}}}' - X_{j,b_{r^{k-1}}}' \rvert + \mathbb{E} \lvert X_{j,b_{r^{k}}}' - X_{j,b_{r^{k-1}}}' \rvert) \notag \\
&\quad = \sum_{j=1 + r^{k} \lfloor \frac{m}{r^{k}} \rfloor}^{r^{k} \lfloor \frac{m}{r^{k}} \rfloor + r^{k}} (\lvert X_{j,b_{r^{k}}}' - X_{j,b_{r^{k-1}}}' \rvert - \mathbb{E} \lvert X_{j,b_{r^{k}}}' - X_{j,b_{r^{k-1}}}' \rvert) + 2 \sum_{j=1 + r^{k} \lfloor \frac{m}{r^{k}} \rfloor}^{r^{k} \lfloor \frac{m}{r^{k}} \rfloor + r^{k}} \mathbb{E} \lvert X_{j,b_{r^{k}}}' - X_{j,b_{r^{k-1}}}' \rvert \notag \\
&\quad \leqslant \Bigg\lvert\sum_{j=1 + r^{k} \lfloor \frac{m}{r^{k}} \rfloor}^{r^{k} \lfloor \frac{m}{r^{k}} \rfloor + r^{k}} (X_{j,b_{r^{k-1}},b_{r^{k}}}'' - \mathbb{E} X_{j,b_{r^{k-1}},b_{r^{k}}}'') \Bigg\rvert + 2 \sum_{j=1 + r^{k} \lfloor \frac{m}{r^{k}} \rfloor}^{r^{k} \lfloor \frac{m}{r^{k}} \rfloor + r^{k}} \mathbb{E} \lvert X_{j} \rvert I_{\left\{\lvert X_{j} \rvert > b_{r^{k-1}} \right\}} \notag
\end{align}
where
\begin{equation*}
X_{j,b_{r^{k-1}},b_{r^{k}}}'' := \lvert X_{j,b_{r^{k}}}' - X_{j,b_{r^{k-1}}}' \rvert = h_{b_{r^{k-1}},b_{r^{k}}}(X_{j}).
\end{equation*}
From \eqref{eq:2.6} we still get
\begin{equation*}
r \left\lfloor \frac{m}{r^{k}} \right\rfloor \leqslant \frac{m}{r^{k-1}} < r  \left\lfloor \frac{m}{r^{k}} \right\rfloor + r, \qquad 1 \leqslant k \leqslant n + 1
\end{equation*}
whence
\begin{equation*}
r \left\lfloor \frac{m}{r^{k}} \right\rfloor \leqslant \left\lfloor \frac{m}{r^{k-1}} \right\rfloor \leqslant r \left\lfloor \frac{m}{r^{k}} \right\rfloor + (r - 1),
\end{equation*}
i.e.
\begin{equation*}
r^{k-1} \left\lfloor \frac{m}{r^{k-1}} \right\rfloor \in \left\{r^{k} \left\lfloor \frac{m}{r^{k}} \right\rfloor, r^{k} \left\lfloor \frac{m}{r^{k}} \right\rfloor + r^{k-1}, r^{k} \left\lfloor \frac{m}{r^{k}} \right\rfloor + 2r^{k-1}, \ldots, r^{k} \left\lfloor \frac{m}{r^{k}} \right\rfloor + (r - 1)r^{k-1} \right\}
\end{equation*}
for every $1 \leqslant k \leqslant n+1$, $1 \leqslant m < r^{n+1}$ $(n \geqslant 0)$. Thus,
\begin{align}
\lvert S_{r^{k-1} \lfloor \frac{m}{r^{k-1}} \rfloor,b_{r^{k-1}}} - S_{r^{k} \lfloor \frac{m}{r^{k}} \rfloor,b_{r^{k-1}}} \rvert &= \Bigg\lvert \sum_{j=1 + r^{k} \lfloor \frac{m}{r^{k}} \rfloor}^{r^{k-1} \lfloor \frac{m}{r^{k-1}} \rfloor} (X_{j,b_{r^{k-1}}}' - \mathbb{E} X_{j,b_{r^{k-1}}}')\Bigg\rvert \notag \\
&\leqslant \max_{0 \leqslant \ell \leqslant r-1} \Bigg\lvert \sum_{j=1 + r^{k} \lfloor \frac{m}{r^{k}} \rfloor}^{r^{k} \lfloor \frac{m}{r^{k}} \rfloor + \ell r^{k-1}} (X_{j,b_{r^{k-1}}}' - \mathbb{E} X_{j,b_{r^{k-1}}}')\Bigg\rvert \label{eq:2.8} \\
&= \max_{1 \leqslant \ell \leqslant r-1} \Bigg\lvert \sum_{j=1 + r^{k} \lfloor \frac{m}{r^{k}} \rfloor}^{r^{k} \lfloor \frac{m}{r^{k}} \rfloor + \ell r^{k-1}} (X_{j,b_{r^{k-1}}}' - \mathbb{E} X_{j,b_{r^{k-1}}}')\Bigg\rvert. \notag
\end{align}
By \eqref{eq:2.3}, \eqref{eq:2.7} and \eqref{eq:2.8}, it follows
\begin{align*}
\lvert S_{m,b_{r^{n+1}}} \rvert &\leqslant \sum_{k=1}^{n+1} \max_{1 \leqslant \ell \leqslant r-1} \Bigg\lvert \sum_{j=1 + r^{k} \lfloor \frac{m}{r^{k}} \rfloor}^{r^{k} \lfloor \frac{m}{r^{k}} \rfloor + \ell r^{k-1}} (X_{j,b_{r^{k-1}}}' - \mathbb{E} X_{j,b_{r^{k-1}}}') \Bigg\rvert \\
& \quad + \sum_{k=1}^{n+1} \Bigg\lvert \sum_{j=1 + r^{k} \lfloor \frac{m}{r^{k}} \rfloor}^{r^{k} \lfloor \frac{m}{r^{k}} \rfloor + r^{k}} (X_{j,b_{r^{k-1}},b_{r^{k}}}'' - \mathbb{E} X_{j,b_{r^{k-1}},b_{r^{k}}}'') \Bigg\rvert + 2 \sum_{k=1}^{n+1} \sum_{j=1 + r^{k} \lfloor \frac{m}{r^{k}} \rfloor}^{r^{k} \lfloor \frac{m}{r^{k}} \rfloor + r^{k}} \mathbb{E} \lvert X_{j} \rvert I_{\left\{\lvert X_{j} \rvert > b_{r^{k-1}} \right\}}
\end{align*}
so that
\begin{align}
\max_{1 \leqslant m < r^{n+1}} \lvert S_{m,b_{r^{n+1}}} \rvert & \leqslant \sum_{k=1}^{n+1} \max_{1 \leqslant m < r^{n+1}} \left[\max_{1 \leqslant \ell \leqslant r-1} \Bigg\lvert \sum_{j=1 + r^{k} \lfloor \frac{m}{r^{k}} \rfloor}^{r^{k} \lfloor \frac{m}{r^{k}} \rfloor + \ell r^{k-1}} (X_{j,b_{r^{k-1}}}' - \mathbb{E} X_{j,b_{r^{k-1}}}') \Bigg\rvert \right] \notag \\
& \qquad + \sum_{k=1}^{n+1} \max_{1 \leqslant m < r^{n+1}} \Bigg\lvert \sum_{j=1 + r^{k} \lfloor \frac{m}{r^{k}} \rfloor}^{r^{k} \lfloor \frac{m}{r^{k}} \rfloor + r^{k}} (X_{j,b_{r^{k-1}},b_{r^{k}}}'' - \mathbb{E} X_{j,b_{r^{k-1}},b_{r^{k}}}'') \Bigg\rvert \notag \\
& \qquad + 2 \sum_{k=1}^{n+1} \max_{1 \leqslant m < r^{n+1}} \sum_{j=1 + r^{k} \lfloor \frac{m}{r^{k}} \rfloor}^{r^{k} \lfloor \frac{m}{r^{k}} \rfloor + r^{k}} \mathbb{E} \lvert X_{j} \rvert I_{\left\{\lvert X_{j} \rvert > b_{r^{k-1}} \right\}} \label{eq:2.9} \\
&\leqslant \sum_{k=1}^{n+1} \max_{1 \leqslant \ell \leqslant r-1} \left[\max_{0 \leqslant h < r^{n-k+1}} \Bigg\lvert \sum_{j=1 + h r^{k}}^{h r^{k} + \ell r^{k-1}} (X_{j,b_{r^{k-1}}}' - \mathbb{E} X_{j,b_{r^{k-1}}}') \Bigg\rvert \right] \notag \\
& \qquad + \sum_{k=1}^{n+1} \max_{0 \leqslant h < r^{n-k+1}} \Bigg\lvert \sum_{j=1 + h r^{k}}^{h r^{k} + r^{k}} (X_{j,b_{r^{k-1}},b_{r^{k}}}'' - \mathbb{E} X_{j,b_{r^{k-1}},b_{r^{k}}}'') \Bigg\rvert \notag \\
&\qquad + 2 \sum_{k=1}^{n+1} \max_{0 \leqslant h < r^{n-k+1}} \sum_{j=1 + h r^{k}}^{h r^{k} + r^{k}} \mathbb{E} \lvert X_{j} \rvert I_{\left\{\lvert X_{j} \rvert > b_{r^{k-1}} \right\}} \notag
\end{align}
because
\begin{equation*}
0 = \left\lfloor \frac{1}{r^{k}} \right\rfloor \leqslant \left\lfloor \frac{m}{r^{k}} \right\rfloor \leqslant \frac{m}{r^{k}} < r^{n-k+1} \quad \Longrightarrow \quad 0 \leqslant \left\lfloor \frac{m}{r^{k}} \right\rfloor \leqslant r^{n-k+1} - 1
\end{equation*}
for all $1 \leqslant k \leqslant n+1$, $1 \leqslant m < r^{n+1}$ $(n \geqslant 0)$. By assumption (b), there is a positive integer $n_{0}$ such that
\begin{equation}\label{eq:2.10}
\sum_{k=1}^{n+1} \max_{0 \leqslant h < r^{n-k+1}} \sum_{j=1 + h r^{k}}^{h r^{k} + r^{k}} \mathbb{E} \lvert X_{j} \rvert I_{\left\{\lvert X_{j} \rvert > b_{r^{k-1}} \right\}} < \frac{\varepsilon b_{r^{n}}}{4}
\end{equation}
for all $n \geqslant n_{0}$. Combining \eqref{eq:2.9} and \eqref{eq:2.10} we obtain, for every $n \geqslant n_{0}$,
\begin{align*}
&\mathbb{P} \left\{\max_{1 \leqslant m < r^{n+1}} \lvert S_{m,b_{r^{n+1}}} \rvert > \varepsilon b_{r^{n}} \right\} \\
&\quad \leqslant \mathbb{P} \left\{\sum_{k=1}^{n+1} \max_{1 \leqslant \ell \leqslant r-1} \Bigg[\max_{0 \leqslant h < r^{n-k+1}} \Bigg\lvert \sum_{j=1 + h r^{k}}^{h r^{k} + \ell r^{k-1}} (X_{j,b_{r^{k-1}}}' - \mathbb{E} X_{j,b_{r^{k-1}}}') \Bigg\rvert \Bigg] \right. \\
&\qquad \left. + \sum_{k=1}^{n+1} \max_{0 \leqslant h < r^{n-k+1}} \Bigg\lvert \sum_{j=1 + h r^{k}}^{h r^{k} + r^{k}} (X_{j,b_{r^{k-1}},b_{r^{k}}}'' - \mathbb{E} X_{j,b_{r^{k-1}},b_{r^{k}}}'') \Bigg\rvert > \frac{\varepsilon b_{r^{n}}}{2} \right\} \\
&\quad \leqslant \mathbb{P} \left\{\sum_{k=1}^{n+1} \max_{1 \leqslant \ell \leqslant r-1} \Bigg[\max_{0 \leqslant h < r^{n-k+1}} \Bigg\lvert \sum_{j=1 + h r^{k}}^{h r^{k} + \ell r^{k-1}} (X_{j,b_{r^{k-1}}}' - \mathbb{E} X_{j,b_{r^{k-1}}}') \Bigg\rvert \Bigg] > \frac{\varepsilon b_{r^{n}}}{4} \right\} \\
&\qquad + \mathbb{P} \left\{\sum_{k=1}^{n+1} \max_{0 \leqslant h < r^{n-k+1}} \Bigg\lvert \sum_{j=1 + h r^{k}}^{h r^{k} + r^{k}} (X_{j,b_{r^{k-1}},b_{r^{k}}}'' - \mathbb{E} X_{j,b_{r^{k-1}},b_{r^{k}}}'') \Bigg\rvert > \frac{\varepsilon b_{r^{n}}}{4} \right\}.
\end{align*}
The thesis holds by noting that
\begin{align*}
&\mathbb{P} \left\{\sum_{k=1}^{n+1} \max_{1 \leqslant \ell \leqslant r-1} \Bigg[\max_{0 \leqslant h < r^{n-k+1}} \Bigg\lvert \sum_{j=1 + h r^{k}}^{h r^{k} + \ell r^{k-1}} (X_{j,b_{r^{k-1}}}' - \mathbb{E} X_{j,b_{r^{k-1}}}') \Bigg\rvert \Bigg] > \frac{\varepsilon b_{r^{n}}}{4} \right\} \\
&\quad \leqslant \mathbb{P} \left\{\sum_{k=1}^{n+1} \max_{1 \leqslant \ell \leqslant r-1} \Bigg[\max_{0 \leqslant h < r^{n-k+1}} \Bigg\lvert \sum_{j=1 + h r^{k}}^{h r^{k} + \ell r^{k-1}} (X_{j,b_{r^{k-1}}}' - \mathbb{E} X_{j,b_{r^{k-1}}}') \Bigg\rvert \Bigg] > \frac{\varepsilon}{C} \sum_{k=1}^{n+1} a_{n,k} \right\} \\
&\quad \leqslant \sum_{k=1}^{n+1} \mathbb{P} \left\{\max_{1 \leqslant \ell \leqslant r-1} \Bigg[\max_{0 \leqslant h < r^{n-k+1}} \Bigg\lvert \sum_{j=1 + h r^{k}}^{h r^{k} + \ell r^{k-1}} (X_{j,b_{r^{k-1}}}' - \mathbb{E} X_{j,b_{r^{k-1}}}') \Bigg\rvert \Bigg] > \frac{\varepsilon a_{n,k}}{C} \right\} \\
&\quad \leqslant \frac{C}{\varepsilon^{2}} \sum_{k=1}^{n+1} \frac{1}{a_{n,k}^{2}} \mathbb{E} \left\{\max_{1 \leqslant \ell \leqslant r - 1} \left[\max_{0 \leqslant h < r^{n-k+1}} \Bigg\lvert \sum_{j=1 + h r^{k}}^{h r^{k} + \ell r^{k-1}} (X_{j,b_{r^{k-1}}}' - \mathbb{E} X_{j,b_{r^{k-1}}}') \Bigg\rvert \right] \right\}^{2} \\
&\quad = \frac{C}{\varepsilon^{2}} \sum_{k=1}^{n+1} \frac{1}{a_{n,k}^{2}} \mathbb{E} \max_{1 \leqslant \ell \leqslant r - 1} \left\{\max_{0 \leqslant h < r^{n-k+1}} \left[\sum_{j=1 + h r^{k}}^{h r^{k} + \ell r^{k-1}}  (X_{j,b_{r^{k-1}}}' - \mathbb{E} X_{j,b_{r^{k-1}}}') \right]^{2} \right\} \\
&\quad \leqslant \frac{C}{\varepsilon^{2}} \sum_{k=1}^{n+1} \frac{1}{a_{n,k}^{2}} \mathbb{E} \sum_{\ell = 1}^{r - 1} \left\{\sum_{h=0}^{r^{n-k+1} - 1} \left[\sum_{j=1 + h r^{k}}^{h r^{k} + \ell r^{k-1}} (X_{j,b_{r^{k-1}}}' - \mathbb{E} X_{j,b_{r^{k-1}}}') \right]^{2} \right\} \\
&\quad \leqslant \frac{C}{\varepsilon^{2}} \sum_{k=1}^{n+1} \frac{1}{a_{n,k}^{2}} \sum_{\ell=1}^{r-1} \sum_{h=0}^{r^{n-k+1} - 1} \sum_{j=1 + h r^{k}}^{h r^{k} + \ell r^{k-1}} \mathbb{E} \lvert X_{j,b_{r^{k-1}}}' \rvert^{2} \\
&\qquad + \frac{C}{\varepsilon^{2}} \sum_{k=1}^{n+1} \frac{1}{a_{n,k}^{2}} \sum_{\ell=1}^{r-1} \sum_{h=0}^{r^{n-k+1} - 1} \sum_{1 + h r^{k} \leqslant i < j \leqslant h r^{k} + \ell r^{k-1}} \mathrm{Cov}(X_{i,b_{r^{k-1}}}';X_{j,b_{r^{k-1}}}') \\
&\quad = \frac{C}{\varepsilon^{2}} \sum_{k=1}^{n+1}\frac{1}{a_{n,k}^{2}} \sum_{\ell=1}^{r-1} \sum_{h=0}^{r^{n-k+1} - 1} \sum_{j=1 + h r^{k}}^{h r^{k} + \ell r^{k-1}} \left(\mathbb{E} X_{j}^{2} I_{\left\{\lvert X_{j} \rvert \leqslant b_{r^{k-1}} \right\}} + b_{r^{k-1}}^{2} \mathbb{P} \big\{\lvert X_{j} \rvert > b_{r^{k-1}} \big\} \right) \\
&\qquad + \frac{C}{\varepsilon^{2}} \sum_{k=1}^{n+1} \frac{1}{a_{n,k}^{2}} \sum_{\ell=1}^{r-1} \sum_{h=0}^{r^{n-k+1} - 1} \sum_{1 + h r^{k} \leqslant i < j \leqslant h r^{k} + \ell r^{k-1}} G_{X_{i},X_{j}}(b_{r^{k-1}}) \\
&\quad \leqslant \frac{C(r)}{\varepsilon^{2}} \sum_{k=1}^{n+1} \frac{1}{a_{n,k}^{2}}  \sum_{h=0}^{r^{n-k+1} - 1} \sum_{j=1 + h r^{k}}^{h r^{k} + r^{k}} \left(\mathbb{E} X_{j}^{2} I_{\left\{\lvert X_{j} \rvert \leqslant b_{r^{k-1}} \right\}} + b_{r^{k-1}}^{2} \mathbb{P} \big\{\lvert X_{j} \rvert > b_{r^{k-1}} \big\} \right) \\
&\qquad + \frac{C}{\varepsilon^{2}} \sum_{k=1}^{n+1} \frac{1}{a_{n,k}^{2}} \sum_{\ell=1}^{r-1} \sum_{h=0}^{r^{n-k+1} - 1} \left[\sum_{1 + h r^{k} \leqslant i < j \leqslant h r^{k} + \ell r^{k-1}} G_{X_{i},X_{j}}(b_{r^{k-1}}) \right]^{+} \\
&\quad \leqslant \frac{C(r)}{\varepsilon^{2}} \sum_{k=1}^{n+1} \sum_{j=1}^{r^{n+1}} \frac{1}{a_{n,k}^{2}} \mathbb{E} X_{j}^{2} I_{\left\{\lvert X_{j} \rvert \leqslant b_{r^{k-1}} \right\}} + \frac{C(r)}{\varepsilon^{2}} \sum_{k=1}^{n+1} \sum_{j=1}^{r^{n+1}} \frac{b_{r^{k-1}}^{2} }{a_{n,k}^{2}} \mathbb{P} \left\{\lvert X_{j} \rvert > b_{r^{k-1}} \right\} \\
&\qquad + \frac{C(r)}{\varepsilon^{2}} \sum_{k=1}^{n+1} \frac{1}{a_{n,k}^{2}} \max_{1 \leqslant \ell \leqslant r - 1} \sum_{h=0}^{r^{n-k+1} - 1} \left[\sum_{1 + h r^{k} \leqslant i < j \leqslant h r^{k} + \ell r^{k-1}} G_{X_{i},X_{j}}(b_{r^{k-1}}) \right]^{+}
\end{align*}
and
\begin{align*}
&\mathbb{P} \left\{\sum_{k=1}^{n+1} \max_{0 \leqslant h < r^{n-k+1}} \Bigg\lvert \sum_{j=1 + h r^{k}}^{h r^{k} + r^{k}} (X_{j,b_{r^{k-1}},b_{r^{k}}}'' - \mathbb{E} X_{j,b_{r^{k-1}},b_{r^{k}}}'') \Bigg\rvert > \frac{\varepsilon b_{r^{n}}}{4} \right\} \\
&\quad \leqslant \mathbb{P} \left\{\sum_{k=1}^{n+1} \max_{0 \leqslant h < r^{n-k+1}} \Bigg\lvert \sum_{j=1 + h r^{k}}^{h r^{k} + r^{k}} (X_{j,b_{r^{k-1}},b_{r^{k}}}'' - \mathbb{E} X_{j,b_{r^{k-1}},b_{r^{k}}}'') \Bigg\rvert > \frac{\varepsilon}{C} \sum_{k=1}^{n+1} a_{n,k} \right\} \\
&\quad \leqslant \sum_{k=1}^{n+1} \mathbb{P} \left\{\max_{0 \leqslant h < r^{n-k+1}} \Bigg\lvert \sum_{j=1 + h r^{k}}^{h r^{k} + r^{k}} (X_{j,b_{r^{k-1}},b_{r^{k}}}'' - \mathbb{E} X_{j,b_{r^{k-1}},b_{r^{k}}}'') \Bigg\rvert > \frac{\varepsilon a_{n,k}}{C} \right\} \\
&\quad \leqslant \frac{C}{\varepsilon^{2}} \sum_{k=1}^{n+1} \frac{1}{a_{n,k}^{2}} \mathbb{E} \left[\max_{0 \leqslant h < r^{n-k+1}} \Bigg\lvert \sum_{j=1 + h r^{k}}^{h r^{k} + r^{k}} (X_{j,b_{r^{k-1}},b_{r^{k}}}'' - \mathbb{E} X_{j,b_{r^{k-1}},b_{r^{k}}}'') \Bigg\rvert \right]^{2} \\
&\quad = \frac{C}{\varepsilon^{2}} \sum_{k=1}^{n+1} \frac{1}{a_{n,k}^{2}} \mathbb{E} \max_{0 \leqslant h < r^{n-k+1}} \left[\sum_{j=1 + h r^{k}}^{h r^{k} + r^{k}} (X_{j,b_{r^{k-1}},b_{r^{k}}}'' - \mathbb{E} X_{j,b_{r^{k-1}},b_{r^{k}}}'') \right]^{2} \\
&\quad \leqslant \frac{C}{\varepsilon^{2}} \sum_{j=1}^{k+1} \frac{1}{a_{j,r^{k}}^{2}} \sum_{h=0}^{r^{n-k+1} - 1} \mathbb{E} \left[ \sum_{j=1 + h r^{k}}^{h r^{k} + r^{k}} (X_{j,b_{r^{k-1}},b_{r^{k}}}'' - \mathbb{E} X_{j,b_{r^{k-1}},b_{r^{k}}}'') \right]^{2} \\
&\quad \leqslant \frac{C}{\varepsilon^{2}} \sum_{k=1}^{n+1} \frac{1}{a_{n,k}^{2}}  \sum_{h=0}^{r^{n-k+1} - 1} \sum_{j=1 + h r^{k}}^{h r^{k} + r^{k}} \mathbb{E} (X_{j,b_{r^{k-1}},b_{r^{k}}}'' - \mathbb{E} X_{j,b_{r^{k-1}},b_{r^{k}}}'')^{2} \\
&\qquad + \frac{C}{\varepsilon^{2}} \sum_{k=1}^{n+1} \frac{1}{a_{n,k}^{2}} \sum_{h=0}^{r^{n-k+1} - 1} \sum_{1 + h r^{k} \leqslant i < j \leqslant h r^{k} + r^{k}} \mathrm{Cov}(X_{j,b_{r^{k-1}},b_{r^{k}}}'' ; X_{j,b_{r^{k-1}},b_{r^{k}}}'') \\
&\quad \leqslant \frac{C}{\varepsilon^{2}} \sum_{k=1}^{n+1} \frac{1}{a_{n,k}^{2}}  \sum_{h=0}^{r^{n-k+1} - 1} \sum_{j=1 + h r^{k}}^{h r^{k} + r^{k}} \mathbb{E}( X_{j,b_{r^{k-1}},b_{r^{k}}}'')^{2} \\
&\qquad + \frac{C}{\varepsilon^{2}} \sum_{k=1}^{n+1} \frac{1}{a_{n,k}^{2}} \sum_{h=0}^{r^{n-k+1} - 1} \sum_{1 + h r^{k} \leqslant i < j \leqslant h r^{k} + r^{k}} H_{X_{i},X_{j}}(b_{r^{k-1}},b_{r^{k}}) \\
&\quad \leqslant \frac{C}{\varepsilon^{2}} \sum_{k=1}^{n+1} \frac{1}{a_{n,k}^{2}}  \sum_{h=0}^{r^{n-k+1} - 1} \sum_{j=1 + h r^{k}}^{h r^{k} + r^{k}} \left(\mathbb{E} X_{j}^{2} I_{\left\{b_{r^{k-1}} < \lvert X_{j} \rvert \leqslant b_{r^{k}} \right\}} + b_{r^{k}}^{2} \mathbb{P} \big\{\lvert X_{j} \rvert > b_{r^{k}} \big\} \right) \\
&\qquad + \frac{C}{\varepsilon^{2}} \sum_{k=1}^{n+1} \frac{1}{a_{n,k}^{2}}  \sum_{h=0}^{r^{n-k+1} - 1} \left[\sum_{1 + h r^{k} \leqslant i < j \leqslant h r^{k} + r^{k}} H_{X_{i},X_{j}}(b_{r^{k-1}},b_{r^{k}}) \right]^{+} \\
&\quad \leqslant \frac{C}{\varepsilon^{2}} \sum_{k=1}^{n+1}  \sum_{j=1}^{r^{n+1}} \frac{1}{a_{n,k}^{2}} \mathbb{E} X_{j}^{2} I_{\left\{b_{r^{k-1}} < \lvert X_{j} \rvert \leqslant b_{r^{k}} \right\}} + \frac{C}{\varepsilon^{2}} \sum_{k=1}^{n+1}  \sum_{j=1}^{r^{n+1}} \frac{b_{r^{k}}^{2}}{a_{n,k}^{2}} \mathbb{P} \big\{\lvert X_{j} \rvert > b_{r^{k}} \big\} \\
&\qquad + \frac{C}{\varepsilon^{2}} \sum_{k=1}^{n+1} \frac{1}{a_{n,k}^{2}} \sum_{h=0}^{r^{n-k+1} - 1} \left[\sum_{1 + h r^{k} \leqslant i < j \leqslant h r^{k} + r^{k}} H_{X_{i},X_{j}}(b_{r^{k-1}},b_{r^{k}}) \right]^{+}
\end{align*}
for each $n \geqslant n_{0}$.
\end{proof}

A sequence $\{X_{n}, \, n \geqslant 1 \}$ of random variables is \emph{stochastically dominated} by a random variable $X$ if there exists a constant $C>0$ such that $\sup_{n \geqslant 1} \mathbb{P} \left\{\abs{X_{n}} > t \right\} \leqslant C \, \mathbb{P} \left\{\abs{X} > t  \right\}$ for all $t > 0$. Under this extra assumption on the random sequence $\{X_{n}, \, n \geqslant 1 \}$ and employing Lemma 1 of \cite{Lita15}, Theorem~\ref{thr:2.1} can be restated as follows:

\begin{corollary}\label{cor:2.1}
Let $r > 1$ be an integer and $\left\{X_{n}, \, n \geqslant 1 \right\}$ a sequence of random variables stochastically dominated by a random variable $X \in \mathscr{L}_{1}$. If $\{b_{n} \}$ is nondecreasing sequence of positive constants, and $\{a_{n,k}, \, 1 \leqslant k \leqslant n + 1, n \geqslant 1 \}$ is an array of positive constants satisfying condition \textnormal{(a)} of Theorem~\ref{thr:2.1} and \\

\noindent \textnormal{(b')} ${\displaystyle \sum_{k=1}^{n+1} r^{k} \mathbb{E} \lvert X \rvert I_{\left\{\lvert X \rvert > b_{r^{k-1}} \right\}} = o(b_{r^{n}})}$ as $n \rightarrow \infty$ \\

\noindent then, for any $\varepsilon > 0$, there is a positive integer $n_{0}$ and a constant $C(r) > 0$ such that
\begin{equation*}
\begin{split}
    &\mathbb{P} \left\{\max_{1 \leqslant m < r^{n+1}} \Bigg\lvert\sum_{k=1}^{m} (X_{k} - \mathbb{E} \, X_{k}) \Bigg\rvert > \varepsilon b_{r^{n}} \right\} \\
    &\quad \leqslant C(r) r^{n} \mathbb{P} \left\{\lvert X \rvert > b_{r^{n+1}} \right\} + \frac{C(r) r^{n}}{\varepsilon^{2}} \sum_{k=1}^{n+1} a_{n,k}^{-2} \mathbb{E} X^{2} I_{\left\{\lvert X \rvert \leqslant b_{r^{k}} \right\}} + \frac{C(r) r^{n} }{\varepsilon^{2}} \sum_{k=1}^{n+1} \frac{b_{r^{k}}^{2}}{a_{n,k}^{2}} \mathbb{P} \big\{\lvert X \rvert > b_{r^{k-1}} \big\} \\
    &\qquad + \frac{C(r)}{\varepsilon^{2}} \sum_{k=1}^{n+1} a_{n,k}^{-2} \max_{1 \leqslant \ell \leqslant r-1} \sum_{h=0}^{r^{n-k+1} - 1} \left[\sum_{1 + hr^{k} \leqslant i < j \leqslant h r^{k} + \ell r^{k-1}}G_{X_{i},X_{j}}(b_{r^{k-1}}) \right]^{+} \\
    &\qquad + \frac{C(r)}{\varepsilon^{2}} \sum_{k=1}^{n+1} a_{n,k}^{-2} \sum_{h=0}^{r^{n-k+1} - 1} \left[\sum_{1 + hr^{k} \leqslant i < j \leqslant h r^{k} + r^{k}} H_{X_{i},X_{j}}(b_{r^{k-1}},b_{r^{k}}) \right]^{+}
\end{split}
\end{equation*}
for all $n \geqslant n_{0}$.
\end{corollary}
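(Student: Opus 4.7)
The plan is to apply Theorem~\ref{thr:2.1} directly and then handle the individual--variable terms by exploiting stochastic domination, leaving the pairwise covariance terms $G_{X_i,X_j}$ and $H_{X_i,X_j}$ untouched (they do not simplify under domination alone).

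First I would verify that (b') implies (b). Under stochastic domination one has $\mathbb{E}\lvert X_j\rvert I_{\{\lvert X_j\rvert>t\}} \leqslant C\,\mathbb{E}\lvert X\rvert I_{\{\lvert X\rvert>t\}}$ for every $t>0$ (a standard consequence of the tail integration formula). Since each block $\{1+hr^{k},\ldots,hr^{k}+r^{k}\}$ has $r^{k}$ indices, the inner sum in (b) is at most $C\,r^{k}\,\mathbb{E}\lvert X\rvert I_{\{\lvert X\rvert>b_{r^{k-1}}\}}$, uniformly in $h$. Summing over $k$ yields exactly the quantity in (b'), so hypothesis (b) of Theorem~\ref{thr:2.1} is in force and the theorem applies.

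Next I would rewrite each of the first three summands on the right-hand side of Theorem~\ref{thr:2.1} in terms of $X$. For the $\mathbb{P}\{\lvert X_k\rvert>b_{r^{n+1}}\}$ sum, $r^{n+1}-1$ terms of size $\leqslant C\,\mathbb{P}\{\lvert X\rvert>b_{r^{n+1}}\}$ give the $C(r)\,r^{n}\,\mathbb{P}\{\lvert X\rvert>b_{r^{n+1}}\}$ term. For the truncated second moment, Lemma~1 of \cite{Lita15} provides the domination
\begin{equation*}
\mathbb{E} X_j^{2}I_{\{\lvert X_j\rvert\leqslant b_{r^{k}}\}} \leqslant C\bigl(\mathbb{E} X^{2}I_{\{\lvert X\rvert\leqslant b_{r^{k}}\}} + b_{r^{k}}^{2}\,\mathbb{P}\{\lvert X\rvert>b_{r^{k}}\}\bigr),
\end{equation*}
and since $\{b_n\}$ is nondecreasing, $\mathbb{P}\{\lvert X\rvert>b_{r^{k}}\}\leqslant \mathbb{P}\{\lvert X\rvert>b_{r^{k-1}}\}$, so the extra term gets absorbed into the $b_{r^{k}}^{2}\,\mathbb{P}\{\lvert X\rvert>b_{r^{k-1}}\}/a_{n,k}^{2}$ block. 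Summing over $j\leqslant r^{n+1}$ then produces the factor $r^{n+1}=r\cdot r^{n}$, yielding the coefficients $C(r)\,r^{n}/\varepsilon^{2}$ stated in the corollary. The pointwise bound $\mathbb{P}\{\lvert X_j\rvert>b_{r^{k-1}}\}\leqslant C\,\mathbb{P}\{\lvert X\rvert>b_{r^{k-1}}\}$ handles the third summand identically.

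Finally, the two covariance blocks involve $G_{X_i,X_j}$ and $H_{X_i,X_j}$, which depend intrinsically on the joint laws and are not susceptible to any simplification from one--dimensional domination; I would simply carry them over verbatim from Theorem~\ref{thr:2.1}. Collecting the four contributions gives precisely the stated inequality for all $n\geqslant n_{0}$. There is no real obstacle here: the proof is a bookkeeping exercise, and the only non-trivial input is the truncated--moment bound from \cite{Lita15}, needed to convert $\mathbb{E} X_j^{2}I_{\{\lvert X_j\rvert\leqslant b_{r^{k}}\}}$ into the corresponding quantity for the dominating variable $X$ without losing the form of the remaining probability term.
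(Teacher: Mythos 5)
Your proposal is correct and follows exactly the route the paper intends: the paper gives no written proof of Corollary~\ref{cor:2.1} beyond remarking that it follows from Theorem~\ref{thr:2.1} together with Lemma~1 of \cite{Lita15}, and your argument (checking that (b') implies (b) via the truncated-first-moment domination, converting the first three summands with the truncated-second-moment and tail dominations plus the monotonicity of $\{b_n\}$, and carrying the $G$ and $H$ blocks over verbatim) is precisely that bookkeeping.
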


\section{SLLN for dependent random variables}

\indent

The next result allows us to obtain SLLN for sequences $\left\{X_{n}, \, n \geqslant 1 \right\}$ of random variables stochastically dominated by a random variable $X \in \mathscr{L}_{1}$. Let us stress out that no assumption concerning to dependence is assumed for $\{X_{n}, \, n \geqslant 1 \}$.

\begin{theorem}\label{thr:3.1}
Let $r > 1$ be an integer and $\{X_{n}, \, n \geqslant 1 \}$ be a sequence of random variables stochastically dominated by a random variable $X \in \mathscr{L}_{1}$. If $\{b_{n} \}$ is a nondecreasing sequence of positive constants, $\{c_{n} \}$ a nonincreasing sequence of nonnegative constants, $\{a_{n,k}, \, 1 \leqslant k \leqslant n + 1, n \geqslant 1 \}$ is an array of positive constants satisfying assumptions $\mathrm{(a)}$, $\mathrm{(b')}$, and \\

\noindent \textnormal{(c)} ${\displaystyle\sum_{m=0}^{\infty} \sum_{n=r^{m}}^{r^{m+1} - 1} r^{m} c_{n} \mathbb{P} \left\{\lvert X \rvert > b_{r^{m+1}} \right\} < \infty}$, \\

\noindent \textnormal{(d)} ${\displaystyle\sum_{m=0}^{\infty} \sum_{n=r^{m}}^{r^{m+1} - 1} \sum_{k=1}^{m+1}\frac{c_{n} r^{m}}{a_{m,k}^{2}} \mathbb{E} X^{2} I_{\left\{\lvert X \rvert \leqslant b_{r^{k}} \right\}} < \infty}$, \\

\noindent \textnormal{(e)} ${\displaystyle\sum_{m=0}^{\infty} \sum_{n=r^{m}}^{r^{m+1} - 1} \sum_{k=1}^{m+1} \frac{c_{n} r^{m} b_{r^{k}}^{2}}{a_{m,k}^{2}} \mathbb{P} \big\{\lvert X \rvert > b_{r^{k-1}} \big\} < \infty}$, \\

\noindent \textnormal{(f)} ${\displaystyle\sum_{m=0}^{\infty} \sum_{n=r^{m}}^{r^{m+1} - 1} \sum_{k=1}^{m+1} \frac{c_{n}}{a_{m,k}^{2}} \max_{1 \leqslant \ell \leqslant r-1} \sum_{h=0}^{r^{m-k+1} - 1} \left[\sum_{1 + hr^{k} \leqslant i < j \leqslant h r^{k} + \ell r^{k-1}} G_{X_{i},X_{j}}(b_{r^{k-1}}) \right]^{+} < \infty}$, \\

\noindent \textnormal{(g)} ${\displaystyle\sum_{m=0}^{\infty} \sum_{n=r^{m}}^{r^{m+1} - 1} \sum_{k=1}^{m+1} \frac{c_{n}}{a_{m,k}^{2}} \sum_{h=0}^{r^{m-k+1} - 1} \left[\sum_{1 + hr^{k} \leqslant i < j \leqslant h r^{k} + r^{k}} H_{X_{i},X_{j}}(b_{r^{k-1}},b_{r^{k}}) \right]^{+} < \infty}$, \\

\noindent then
\begin{equation}
\sum_{n=1}^{\infty} c_{n} \mathbb{P} \left\{\max_{1 \leqslant j \leqslant n} \abs{\sum_{k=1}^{j}(X_{k} - \mathbb{E} \, X_{k})} > \varepsilon b_{n} \right\} < \infty
\end{equation}
for all $\varepsilon > 0$.
\end{theorem}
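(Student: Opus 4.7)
The plan is to deduce this summability statement directly from the maximal inequality of Corollary~\ref{cor:2.1}, by partitioning the index set $\mathbb{N}$ into the geometric blocks $[r^{m},r^{m+1})$ that naturally match the structure of the corollary.

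\textbf{Geometric blocking.} For every integer $m\geqslant 0$ and every $n$ with $r^{m}\leqslant n<r^{m+1}$, monotonicity of $\{b_{n}\}$ yields $b_{n}\geqslant b_{r^{m}}$, while $n<r^{m+1}$ makes the running maximum over $1\leqslant j\leqslant n$ a submaximum of the one over $1\leqslant j<r^{m+1}$; hence
$$\mathbb{P}\left\{\max_{1\leqslant j\leqslant n}\Bigg|\sum_{k=1}^{j}(X_{k}-\mathbb{E}\,X_{k})\Bigg|>\varepsilon b_{n}\right\}\leqslant \mathbb{P}\left\{\max_{1\leqslant j<r^{m+1}}\Bigg|\sum_{k=1}^{j}(X_{k}-\mathbb{E}\,X_{k})\Bigg|>\varepsilon b_{r^{m}}\right\}.$$

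\textbf{Application of the maximal inequality.} Denote by $m_{0}$ the integer threshold supplied by Corollary~\ref{cor:2.1} for the given $\varepsilon$. For each $m\geqslant m_{0}$, the probability on the right above is majorized by the five-term upper bound of Corollary~\ref{cor:2.1}, with the corollary's index $n$ specialised to $m$.

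\textbf{Summation against $c_{n}$.} I would split
$$\sum_{n=1}^{\infty}c_{n}\mathbb{P}\{\cdots\}=\sum_{m=0}^{m_{0}-1}\sum_{n=r^{m}}^{r^{m+1}-1}c_{n}\mathbb{P}\{\cdots\}+\sum_{m=m_{0}}^{\infty}\sum_{n=r^{m}}^{r^{m+1}-1}c_{n}\mathbb{P}\{\cdots\}.$$
The head is a finite sum of finite nonnegative terms, hence finite. For the tail, insert the bound from Corollary~\ref{cor:2.1} and interchange the order of summation (justified by nonnegativity of every term). The five resulting pieces are, up to a constant of the form $C(r)\varepsilon^{-2}$, exactly the series appearing respectively in hypotheses (c), (d), (e), (f), and (g); each of these is finite by assumption.

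\textbf{Main obstacle.} All the genuine probabilistic and analytic work has already been carried out in Theorem~\ref{thr:2.1} and Corollary~\ref{cor:2.1}; the present argument is purely a bookkeeping exercise, matching the five summands of the maximal inequality against the five tail-summability hypotheses (c)--(g) which were tailored precisely for this purpose. The only mildly delicate point is the finite initial segment $m<m_{0}$, where the Corollary's bound is not yet available; this is handled trivially because the probabilities involved do not exceed $1$ and the partial sum $\sum_{m=0}^{m_{0}-1}\sum_{n=r^{m}}^{r^{m+1}-1}c_{n}$ is a finite sum of finite nonnegative numbers.
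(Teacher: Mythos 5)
Your proposal is correct and follows essentially the same route as the paper: group the sum over $n$ into the geometric blocks $[r^{m},r^{m+1})$, bound each block probability by the block-maximal probability at level $\varepsilon b_{r^{m}}$, and apply Corollary~\ref{cor:2.1}, whose five terms (after multiplying by $\sum_{n=r^{m}}^{r^{m+1}-1}c_{n}$ and summing over $m$) are precisely the series in (c)--(g). Your explicit treatment of the finitely many blocks with $m<m_{0}$ is a detail the paper leaves implicit, but it is handled correctly.
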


\begin{proof}
Let $r > 1$ be a (fixed) integer and $\varepsilon > 0$. Hence,
\begin{align*}
&\sum_{n=1}^{\infty} c_{n} \mathbb{P} \left\{\max_{1 \leqslant j \leqslant n} \abs{\sum_{k=1}^{j}(X_{k} - \mathbb{E} \, X_{k})} > \varepsilon b_{n} \right\} \\
&\qquad = \sum_{m=0}^{\infty} \sum_{n=r^{m}}^{r^{m+1} - 1} c_{n} \mathbb{P} \left\{\max_{1 \leqslant j \leqslant n} \abs{\sum_{k=1}^{j}(X_{k} - \mathbb{E} \, X_{k})} > \varepsilon b_{n} \right\} \\
&\qquad \leqslant \sum_{m=0}^{\infty} \mathbb{P} \left\{\max_{1 \leqslant j < r^{m+1}} \abs{\sum_{k=1}^{j}(X_{k} - \mathbb{E} \, X_{k})} > \varepsilon b_{r^{m}} \right\} \sum_{n=r^{m}}^{r^{m+1} - 1} c_{n}
\end{align*}
and the conclusion is a direct consequence of Corollary~\ref{cor:2.1}.
\end{proof}

\begin{corollary}\label{cor:3.1}
Let $r > 1$ be an integer, $1 \leqslant p < 2$, and $\{X_{n}, \, n \geqslant 1 \}$ be a sequence of random variables stochastically dominated by a random variable $X \in \mathscr{L}_{p}$. If
\begin{equation}\label{eq:3.2}
\sum_{1 \leqslant i < j \leqslant \infty} \sum_{k=1}^{\infty} r^{(2/\alpha - 2/p)k - 2(k \vee \log_{r} j)/\alpha} \left[G_{X_{i},X_{j}}^{+} \big(r^{(k - 1)/p} \big) + H_{X_{i},X_{j}}^{+} \big(r^{(k - 1)/p}, r^{k/p} \big) \right] < \infty
\end{equation}
for some $p < \alpha < 2$, then $\sum_{k = 1}^{n} (X_{k} - \mathbb{E} \, X_{k})/n^{1/p} \overset{\textnormal{a.s.}}{\longrightarrow} 0$.
\end{corollary}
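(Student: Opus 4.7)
The plan is to invoke Theorem~\ref{thr:3.1} with the choices $b_{n} = n^{1/p}$, $c_{n} = 1/n$, and
\begin{equation*}
a_{m,k} = r^{(m-k+1)/\alpha + (k-1)/p}, \qquad 1 \leqslant k \leqslant m+1,
\end{equation*}
which interpolates between a ``block scale'' $r^{(m-k+1)/\alpha}$ and a ``truncation scale'' $r^{(k-1)/p}$; the exponent structure appearing in \eqref{eq:3.2} motivates this choice. Since $\alpha > p$ gives $r^{1/p - 1/\alpha} > 1$, assumption (a) follows because $\sum_{k=1}^{n+1} a_{m,k}$ is a geometric progression dominated by its final term $r^{n/p} = b_{r^{n}}$. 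For (b'), on the event $\{\lvert X \rvert > r^{(k-1)/p}\}$ one has $\lvert X \rvert \leqslant \lvert X \rvert^{p} r^{-(p-1)(k-1)/p}$, so
\begin{equation*}
\sum_{k=1}^{n+1} r^{k} \mathbb{E} \lvert X \rvert I_{\{\lvert X \rvert > r^{(k-1)/p}\}} \leqslant r \sum_{k=1}^{n+1} r^{(k-1)/p} \mathbb{E} \lvert X \rvert^{p} I_{\{\lvert X \rvert > r^{(k-1)/p}\}},
\end{equation*}
which is $o(r^{n/p})$ since $\mathbb{E} \lvert X \rvert^{p} I_{\{\lvert X \rvert > M\}} \to 0$ as $M \to \infty$ (split the sum at a fixed threshold in $k$ and let $n \to \infty$).

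The ``distributional'' conditions (c), (d), (e) are checked by interchanging $\sum_{m}$ with $\sum_{k}$, noting that $\sum_{n=r^{m}}^{r^{m+1}-1} c_{n} \asymp 1$, and evaluating the resulting geometric series over $m \geqslant k-1$ (convergent since $\alpha < 2$). Each reduces to either $\sum_{k} r^{k-1} \mathbb{P}\{\lvert X \rvert > r^{(k-1)/p}\}$ (for (c), (e)) or $\sum_{k} r^{(k-1)(1-2/p)} \mathbb{E} X^{2} I_{\{\lvert X \rvert \leqslant r^{k/p}\}}$ (for (d)). Both are finite whenever $\mathbb{E} \lvert X \rvert^{p} < \infty$: the former by a standard Fubini/condensation argument, and the latter by dyadically decomposing $\mathbb{E} X^{2} I_{\{\lvert X \rvert \leqslant r^{k/p}\}} \leqslant \sum_{j \leqslant k} r^{j(2-p)/p} \mathbb{E} \lvert X \rvert^{p} I_{\{r^{(j-1)/p} < \lvert X \rvert \leqslant r^{j/p}\}}$ and swapping the order of summation.

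Conditions (f) and (g) are where hypothesis \eqref{eq:3.2} intervenes. Upper-bounding $[\sum_{i<j} G_{X_{i},X_{j}}(b_{r^{k-1}})]^{+} \leqslant \sum_{i<j} G_{X_{i},X_{j}}^{+}(b_{r^{k-1}})$ (analogously for $H$) and interchanging the order of summation, I would collect the contribution of a fixed pair $(i,j)$ at a fixed truncation level $k$: the pair contributes only when $i,j$ lie in a common $r^{k}$-block, so in particular $j - i < r^{k}$, and $m$ is constrained by $m \geqslant k-1$ together with $j \leqslant r^{m+1}$. The remaining sum over $m \geqslant \max(k-1, \lceil \log_{r} j \rceil - 1)$ of $r^{-2(m-k+1)/\alpha - 2(k-1)/p}$ is a convergent geometric series in $r^{-2/\alpha}$, dominated by its first term, and this first term is of order $r^{(2/\alpha - 2/p)k - 2(k \vee \log_{r} j)/\alpha}$ --- exactly the weight appearing in \eqref{eq:3.2}. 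Hence (f) and (g) follow from the hypothesis.

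With all assumptions of Theorem~\ref{thr:3.1} verified, one obtains $\sum_{n} n^{-1} \mathbb{P}\{\max_{1 \leqslant j \leqslant n} \lvert S_{j} \rvert > \varepsilon n^{1/p}\} < \infty$, where $S_{j} := \sum_{k=1}^{j}(X_{k} - \mathbb{E} X_{k})$. Grouping $n \in [r^{m}, r^{m+1})$ and using monotonicity of $c_{n}$ and $b_{n}$ yields $\sum_{m} \mathbb{P}\{\max_{1 \leqslant j < r^{m+1}} \lvert S_{j} \rvert > \varepsilon r^{m/p}\} < \infty$; Borel--Cantelli together with $n^{1/p} \geqslant r^{m/p}$ for $n \geqslant r^{m}$ then gives $S_{n}/n^{1/p} \to 0$ almost surely, as desired. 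The main obstacle I anticipate is the combinatorial bookkeeping for (f) and (g): confirming that the ``same-block'' restriction on $(i,j)$ is automatic in the inner sums over $(h,\ell)$, and that the weights collected from summing over $m$ recover precisely the exponent $r^{(2/\alpha - 2/p)k - 2(k \vee \log_{r} j)/\alpha}$ of \eqref{eq:3.2}.
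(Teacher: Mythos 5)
Your proposal is correct and follows essentially the same route as the paper: the same choices $b_{n}=n^{1/p}$, $c_{n}=1/n$, and $a_{m,k}=r^{m/\alpha+(1/p-1/\alpha)k}$ up to the harmless constant factor $r^{1/\alpha-1/p}$, the same verifications of (a)--(g) (with hypothesis \eqref{eq:3.2} entering exactly through the geometric sum over $m\geqslant k\vee\log_{r}j$ that produces the weight $r^{(2/\alpha-2/p)k-2(k\vee\log_{r}j)/\alpha}$), and the same conclusion via Theorem~\ref{thr:3.1}. The only cosmetic difference is that you carry out the final subsequence/Borel--Cantelli reduction by hand where the paper cites Theorem 2.2 of \cite{Hu16}; just take care there to shrink the range of the maximum and enlarge the threshold (i.e.\ bound $\mathbb{P}\{\max_{j\leqslant n}|S_j|>\varepsilon n^{1/p}\}$ from below by $\mathbb{P}\{\max_{j\leqslant r^{m}}|S_j|>\varepsilon r^{(m+1)/p}\}$ for $n\in[r^{m},r^{m+1})$), since as literally written your intermediate display has these two adjustments going the wrong way.
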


\begin{proof}
Consider $1 \leqslant p < \alpha < 2$, a (fixed) integer $r > 1$, $c_{n} = 1/n$, $b_{n} = n^{1/p}$, and $a_{n,k} = r^{n/\alpha + (1/p - 1/\alpha) k}$. Hence,
\begin{gather*}
\limsup_{m \rightarrow \infty} \frac{b_{r^{m+1}}}{b_{r^{m}}} = \limsup_{m \rightarrow \infty} \frac{r^{(m+1)/p}}{r^{m/p}} = r^{1/p}, \\
\liminf_{m \rightarrow \infty} \sum_{n=r^{m}}^{r^{m+1} - 1} c_{n} \geqslant \liminf_{m \rightarrow \infty} \int_{r^{m}}^{r^{m+1}} \frac{\mathrm{d}u}{u} = \log r
\end{gather*}
and from Theorem 2.2 of \cite{Hu16}, it suffices to show
\begin{equation}\label{eq:3.3}
\sum_{n=1}^{\infty} \frac{1}{n} \mathbb{P} \left\{\max_{1 \leqslant j \leqslant n} \abs{\sum_{k=1}^{j}(X_{k} - \mathbb{E} \, X_{k})} > \varepsilon n^{1/p} \right\} < \infty
\end{equation}
for all $\varepsilon > 0$. We have
\begin{equation*}
\sum_{k=1}^{n+1} a_{n,k} = r^{n/\alpha} \cdot \frac{r^{n/p - n/\alpha + 2/p - 2/\alpha} - r^{1/p - 1/\alpha}}{r^{1/p - 1/\alpha} - 1} \leqslant \frac{r^{2/p - 2/\alpha}}{r^{1/p - 1/\alpha} - 1} \cdot r^{n/p}
\end{equation*}
so that assumption (a) of Theorem~\ref{thr:3.1} holds. On the other hand, the dominated convergence theorem guarantees
\begin{equation*}
\mathbb{E} \lvert X \rvert^{p} I_{\left\{\lvert X \rvert > r^{(n-1)/p} \right\}} = o(1), \quad n \rightarrow \infty.
\end{equation*}
Moreover, for each $k \geqslant 1$,
\begin{equation*}
r^{k + (1 - p)(k - 1)/p - n/p} = o(1), \quad n \rightarrow \infty
\end{equation*}
and
\begin{align*}
\sum_{k=1}^{n+1} r^{k + (1 - p)(k - 1)/p - n/p} = \frac{r^{1 + 1/p} - r^{1 - n/p}}{r^{1/p} - 1} \leqslant \frac{r^{1 + 1/p}}{r^{1/p} - 1}
\end{align*}
which implies
\begin{align*}
\sum_{k=1}^{n+1} r^{k - n/p} \mathbb{E} \lvert X \rvert I_{\left\{\lvert X \rvert > r^{(k-1)/p} \right\}} &\leqslant \sum_{k=1}^{n+1} r^{k + (1-p)(k-1)/p - n/p} \mathbb{E} \lvert X \rvert^{p} I_{\left\{\lvert X \rvert > r^{(k-1)/p} \right\}} = o(1), \quad n \rightarrow \infty
\end{align*}
by Toeplitz's lemma (see \cite{Linero13}) proving assumption (b') of Theorem~\ref{thr:3.1}. According to
\begin{equation}\label{eq:3.4}
\sum_{n=r^{m}}^{r^{m+1} - 1} c_{n} \leqslant \frac{r^{m+1} - r^{m}}{r^{m}} = r - 1
\end{equation}
one still get
\begin{align*}
\sum_{m=0}^{\infty} \sum_{n=r^{m}}^{r^{m+1} - 1} r^{m} c_{n} \mathbb{P} \left\{\lvert X \rvert > b_{r^{m+1}} \right\} &\leqslant (r - 1) \sum_{m=0}^{\infty} r^{m} \mathbb{P} \left\{\lvert X \rvert > r^{(m+1)/p} \right\} \\
&= (r - 1) \sum_{m=0}^{\infty} \sum_{j=m}^{\infty} r^{m} \mathbb{P} \left\{r^{j+1} < \lvert X \rvert^{p} \leqslant r^{j+2} \right\} \\
&= (r - 1) \sum_{j=0}^{\infty} \sum_{m=0}^{j} r^{m} \mathbb{P} \left\{r^{j+1} < \lvert X \rvert^{p} \leqslant r^{j+2} \right\} \\
&\leqslant \sum_{j=0}^{\infty} r^{j+1} \mathbb{P} \left\{r^{j+1} < \lvert X \rvert^{p} \leqslant r^{j+2} \right\} \\
&= \sum_{j=0}^{\infty} \mathbb{E} \left(r^{j+1} I_{\left\{r^{j+1} < \lvert X \rvert^{p} \leqslant r^{j+2} \right\}} \right) \\
&\leqslant \sum_{j=0}^{\infty} \mathbb{E} \lvert X \rvert^{p} I_{\left\{r^{j+1} < \lvert X \rvert^{p} \leqslant r^{j+2} \right\}} \\
&= \mathbb{E} \lvert X \rvert^{p} I_{\left\{\lvert X \rvert^{p} > r \right\}} \\
&\leqslant \mathbb{E} \lvert X \rvert^{p}
\end{align*}
and (c) of Theorem~\ref{thr:3.1} holds. By \eqref{eq:3.4}, we also obtain
\begin{align*}
&\sum_{m=0}^{\infty} \sum_{n=r^{m}}^{r^{m+1} - 1} \sum_{k=1}^{m+1} \frac{c_{n} r^{m}}{a_{m,k}^{2}} \mathbb{E} X^{2} I_{\left\{\lvert X \rvert \leqslant b_{r^{k}} \right\}} \\
&\quad \leqslant (r - 1) \sum_{m=0}^{\infty} \sum_{k=1}^{m+1} r^{(1 - 2/\alpha) m} r^{2(1/\alpha - 1/p) k} \mathbb{E} X^{2} I_{\left\{\lvert X \rvert \leqslant r^{k/p} \right\}} \\
&\quad = (r - 1) \sum_{k=1}^{\infty} \sum_{m=k-1}^{\infty} r^{(1 - 2/\alpha) m} r^{2(1/\alpha - 1/p) k} \mathbb{E} X^{2} I_{\left\{\lvert X \rvert \leqslant r^{k/p} \right\}} \\
&\quad = \frac{(r - 1)r^{2/\alpha - 1}}{1 - r^{1 - 2/\alpha}} \sum_{k=1}^{\infty} r^{(1 - 2/p)k} \mathbb{E} X^{2} I_{\left\{\lvert X \rvert \leqslant r^{k/p} \right\}} \\
&\quad = \frac{(r - 1)r^{2/\alpha - 1}}{1 - r^{1 - 2/\alpha}} \sum_{k=1}^{\infty} r^{(1 - 2/p)k} \mathbb{E} X^{2} I_{\left\{\lvert X \rvert \leqslant 1 \right\}} \\
&\qquad + \frac{(r - 1)r^{2/\alpha - 1}}{1 - r^{1 - 2/\alpha}} \sum_{k=1}^{\infty} \sum_{j=1}^{k} r^{(1 - 2/p)k} \mathbb{E} X^{2} I_{\left\{r^{(j-1)/p} < \lvert X \rvert \leqslant r^{j/p} \right\}} \\
&\quad \leqslant \frac{(r - 1)r^{2/\alpha - 2/p}}{\left(1 - r^{1 - 2/\alpha} \right)\left(1 - r^{1 - 2/p} \right)} + \frac{(r - 1)r^{2/\alpha - 1}}{1 - r^{1 - 2/\alpha}} \sum_{j=1}^{\infty} \underset{=\frac{r^{j(1 - 2/p)}}{1 - r^{1 - 2/p}}}{\underbrace{\sum_{k=j}^{\infty} r^{k(1 - 2/p)}}} \mathbb{E} X^{2} I_{\left\{r^{(j-1)/p} < \lvert X \rvert \leqslant r^{j/p} \right\}} \\
&\quad \leqslant \frac{(r - 1)r^{2/\alpha - 2/p}}{\left(1 - r^{1 - 2/\alpha} \right)\left(1 - r^{1 - 2/p} \right)} + \frac{(r - 1)r^{2/\alpha - 1}}{\left(1 - r^{1 - 2/\alpha} \right)\left(1 - r^{1 - 2/p} \right)} \sum_{j=1}^{\infty} \mathbb{E} \lvert X \rvert^{p} I_{\left\{r^{(j-1)/p} < \lvert X \rvert \leqslant r^{j/p} \right\}} \\
&\quad = \frac{(r - 1)r^{2/\alpha - 2/p}}{\left(1 - r^{1 - 2/\alpha} \right)\left(1 - r^{1 - 2/p} \right)} + \frac{(r - 1)r^{2/\alpha - 1}}{\left(1 - r^{1 - 2/\alpha} \right)\left(1 - r^{1 - 2/p} \right)} \mathbb{E} \lvert X \rvert^{p} I_{\left\{\lvert X \rvert > 1 \right\}} \\
&\quad \leqslant \frac{(r - 1)r^{2/\alpha - 2/p}}{\left(1 - r^{1 - 2/\alpha} \right)\left(1 - r^{1 - 2/p} \right)} + \frac{(r - 1)r^{2/\alpha - 1}}{\left(1 - r^{1 - 2/\alpha} \right)\left(1 - r^{1 - 2/p} \right)} \mathbb{E} \lvert X \rvert^{p}
\end{align*}
and
\begin{align*}
&\sum_{m=0}^{\infty} \sum_{n=r^{m}}^{r^{m+1} - 1} \sum_{k=1}^{m+1} \frac{c_{n} r^{m} r^{2k/p}}{a_{m,k}^{2}} \mathbb{P} \big\{\lvert X \rvert > b_{r^{(k - 1)/p}} \big\} \\
&\quad \leqslant (r - 1) \sum_{m=0}^{\infty} \sum_{k=1}^{m+1} r^{(2/\alpha - 2/p)k - 2m/\alpha} r^{m} r^{2k/p} \mathbb{P} \big\{\lvert X \rvert > r^{(k - 1)/p} \big\} \\
&\quad = (r - 1) \sum_{k=1}^{\infty} \sum_{m=k - 1}^{\infty} r^{(1 - 2/\alpha)m} r^{2k/\alpha} \mathbb{P} \big\{\lvert X \rvert > r^{(k - 1)/p} \big\} \\
&\quad = \frac{(r - 1)r^{2/\alpha - 1}}{1 - r^{1 - 2/\alpha}} \sum_{k=1}^{\infty} r^{k} \mathbb{P} \big\{\lvert X \rvert > r^{(k - 1)/p} \big\} \\
&\quad = \frac{(r - 1)r^{2/\alpha - 1}}{1 - r^{1 - 2/\alpha}} \sum_{k=1}^{\infty} \sum_{j=k - 1}^{\infty} r^{k} \mathbb{P} \big\{r^{j/p} < \lvert X \rvert \leqslant r^{(j+1)/p} \big\} \\
&\quad = \frac{(r - 1)r^{2/\alpha - 1}}{1 - r^{1 - 2/\alpha}} \sum_{j=0}^{\infty} \sum_{k=1}^{j+1}  r^{k} \mathbb{P} \big\{r^{j/p} < \lvert X \rvert \leqslant r^{(j+1)/p} \big\} \\
&\quad \leqslant \frac{r^{2/\alpha + 1}}{1 - r^{1 - 2/\alpha}} \sum_{j=0}^{\infty} r^{j} \mathbb{P} \big\{r^{j/p} < \lvert X \rvert \leqslant r^{(j+1)/p} \big\} \\
&\quad \leqslant \frac{r^{2/\alpha + 1}}{1 - r^{1 - 2/\alpha}} \sum_{j=0}^{\infty} \mathbb{E} \lvert X \rvert^{p} I_{\left\{r^{j/p} < \lvert X \rvert \leqslant r^{(j+1)/p} \right\}} \\
&\quad = \frac{r^{2/\alpha + 1}}{1 - r^{1 - 2/\alpha}} \mathbb{E} \lvert X \rvert^{p} I_{\left\{\lvert X \rvert > 1 \right\}} \\
&\quad \leqslant \frac{r^{2/\alpha + 1}}{1 - r^{1 - 2/\alpha}} \mathbb{E} \lvert X \rvert^{p}
\end{align*}
so that assumptions (d), (e) of Theorem~\ref{thr:3.1} both hold. Finally, we have
\begin{equation}\label{eq:3.5}
\begin{split}
&\sum_{m=0}^{\infty} \sum_{n=r^{m}}^{r^{m+1} - 1} \sum_{k=1}^{m+1} \frac{c_{n}}{a_{m,k}^{2}} \max_{1 \leqslant \ell \leqslant r-1} \sum_{h=0}^{r^{m-k+1} - 1} \left[\sum_{1 + hr^{k} \leqslant i < j \leqslant h r^{k} + \ell r^{k-1}} G_{X_{i},X_{j}}(b_{r^{k-1}}) \right]^{+} \\
&\quad \leqslant \sum_{m=0}^{\infty} \sum_{n=r^{m}}^{r^{m+1} - 1} \sum_{k=1}^{m+1} \frac{r^{(2/\alpha - 2/p)k - 2m/\alpha}}{n} \max_{1 \leqslant \ell \leqslant r-1} \sum_{h=0}^{r^{m-k+1} - 1} \sum_{1 + hr^{k} \leqslant i < j \leqslant h r^{k} + \ell r^{k-1}} G_{X_{i},X_{j}}^{+} \big(r^{(k - 1)/p} \big) \\
&\quad \leqslant (r - 1) \sum_{m=0}^{\infty} \sum_{k=1}^{m+1} r^{(2/\alpha - 2/p)k - 2m/\alpha} \sum_{1 \leqslant i < j \leqslant r^{m+1}} G_{X_{i},X_{j}}^{+} \big(r^{(k - 1)/p} \big) \\
&\quad = (r - 1)r^{\alpha/2} \sum_{m=1}^{\infty} \sum_{k=1}^{m} r^{(2/\alpha - 2/p)k - 2m/\alpha} \sum_{1 \leqslant i < j \leqslant r^{m}} G_{X_{i},X_{j}}^{+} \big(r^{(k - 1)/p} \big) \\
&\quad = (r - 1)r^{\alpha/2} \sum_{1 \leqslant i < j \leqslant \infty} \sum_{k=1}^{\infty} \sum_{m=k}^{\infty} r^{(2/\alpha - 2/p)k - 2m/\alpha} I_{\left\{j \leqslant r^{m}  \right\}} G_{X_{i},X_{j}}^{+} \big(r^{(k - 1)/p} \big) \\
&\quad \leqslant \frac{(r - 1)r^{\alpha/2}}{1 - r^{-2/\alpha}} \sum_{1 \leqslant i < j \leqslant \infty} \sum_{k=1}^{\infty} r^{(2/\alpha - 2/p)k - 2(k \vee \log_{r} j)/\alpha} G_{X_{i},X_{j}}^{+} \big(r^{(k - 1)/p} \big) < \infty
\end{split}
\end{equation}
as well as
\begin{align}
&\sum_{m=0}^{\infty} \sum_{n=r^{m}}^{r^{m+1} - 1} \sum_{k=1}^{m+1} \frac{c_{n}}{a_{m,k}^{2}} \sum_{h=0}^{r^{m-k+1} - 1} \left[\sum_{1 + hr^{k} \leqslant i < j \leqslant h r^{k} + r^{k}} H_{X_{i},X_{j}}(b_{r^{k-1}},b_{r^{k}}) \right]^{+} \notag \\
&\quad \leqslant \sum_{m=0}^{\infty} \sum_{n=r^{m}}^{r^{m+1} - 1} \sum_{k=1}^{m+1} \frac{r^{(2/\alpha - 2/p)k - 2m/\alpha}}{n} \sum_{h=0}^{r^{m-k+1} - 1} \sum_{1 + hr^{k} \leqslant i < j \leqslant h r^{k} + r^{k}} H_{X_{i},X_{j}}^{+} \big(r^{(k - 1)/p}, r^{k/p} \big) \notag \\
&\quad \leqslant (r - 1) \sum_{m=0}^{\infty} \sum_{k=1}^{m+1} r^{(2/\alpha - 2/p)k - 2m/\alpha} \sum_{1 \leqslant i < j \leqslant r^{m+1}} H_{X_{i},X_{j}}^{+} \big(r^{(k - 1)/p}, r^{k/p} \big) \notag \\
&\quad = (r - 1)r^{\alpha/2} \sum_{m=1}^{\infty} \sum_{k=1}^{m} r^{(2/\alpha - 2/p)k - 2m/\alpha} \sum_{1 \leqslant i < j \leqslant r^{m}} H_{X_{i},X_{j}}^{+} \big(r^{(k - 1)/p}, r^{k/p} \big) \label{eq:3.6} \\
&\quad = (r - 1)r^{\alpha/2} \sum_{1 \leqslant i < j \leqslant \infty} \sum_{k=1}^{\infty} \sum_{m=k}^{\infty} r^{(2/\alpha - 2/p)k - 2m/\alpha} I_{\left\{j \leqslant r^{m}  \right\}} H_{X_{i},X_{j}}^{+} \big(r^{(k - 1)/p}, r^{k/p} \big) \notag \\
&\quad \leqslant \frac{(r - 1)r^{\alpha/2}}{1 - r^{-2/\alpha}} \sum_{1 \leqslant i < j \leqslant \infty} \sum_{k=1}^{\infty} r^{(2/\alpha - 2/p)k - 2(k \vee \log_{r} j)/\alpha} H_{X_{i},X_{j}}^{+} \big(r^{(k - 1)/p}, r^{k/p} \big) < \infty, \notag
\end{align}
entailing assumptions (f) and (g) of Theorem~\ref{thr:3.1}. The proof is complete.
\end{proof}

\subsection{Sufficient conditions for assumptions (f) and (g)}\label{subsec:3.1}

\indent

We now present sufficient (moment) conditions which guarantee assumptions (f) and (g) of Theorem~\ref{thr:3.1}. Suppose a sequence of random variables $\{X_{n}, \, n \geqslant 1 \}$ for which there is a (fixed) constant $C > 0$ such that
\begin{equation}\label{eq:3.7}
\abs{\sum_{m \leqslant i < j \leqslant n} G_{X_{i},X_{j}}(b_{r^{k-1}})} \leqslant C \sum_{j=m}^{n} \mathbb{E} \lvert g_{b_{r^{k-1}}}(X_{j}) \rvert^{2}
\end{equation}
for all $1 \leqslant m \leqslant n$ (recall that $G_{X_{i},X_{j}}(b_{r^{k-1}}) := \mathrm{Cov} (g_{b_{r^{k-1}}}(X_{i}), g_{b_{r^{k-1}}}(X_{j}))$). Thus,
\begin{align*}
&\max_{1 \leqslant \ell \leqslant r-1} \sum_{h=0}^{r^{m-k+1} - 1} \left[\sum_{1 + hr^{k} \leqslant i < j \leqslant h r^{k} + \ell r^{k-1}} G_{X_{i},X_{j}}(b_{r^{k-1}}) \right]^{+} \\
&\quad \leqslant \max_{1 \leqslant \ell \leqslant r-1} \sum_{h=0}^{r^{m-k+1} - 1} \abs{\sum_{1 + hr^{k} \leqslant i < j \leqslant h r^{k} + \ell r^{k-1}} G_{X_{i},X_{j}}(b_{r^{k-1}})} \\
&\quad \leqslant \max_{1 \leqslant \ell \leqslant r-1} C \sum_{h=0}^{r^{m-k+1} - 1} \sum_{j=1 + hr^{k}}^{h r^{k} + \ell r^{k-1}} \mathbb{E} \lvert g_{b_{r^{k-1}}}(X_{j}) \rvert^{2} \\
&\quad \leqslant C \sum_{j=1}^{r^{m+1}} \mathbb{E} \lvert g_{b_{r^{k-1}}}(X_{j}) \rvert^{2} \\
&\quad = C \sum_{j=1}^{r^{m+1}} \left[\mathbb{E} X_{j}^{2} I_{\left\{\lvert X_{j} \rvert \leqslant b_{r^{k-1}} \right\}} + b_{r^{k-1}}^{2} \mathbb{P} \big\{\lvert X_{j} \rvert > b_{r^{k-1}} \big\} \right]
\end{align*}
and if we additionally assume \eqref{eq:3.7} in Theorem~\ref{thr:3.1}, then condition (f) can be removed.

On the other hand, by putting $f_{L,K}(t) := g_{K}(t) - g_{L}(t)$ and admitting the existence of a constant $C > 0$ (which is not necessarily the same on each appearance) such that the following inequalities hold
\begin{align}
\abs{\sum_{m \leqslant i < j \leqslant n} \mathrm{Cov}\big(f_{b_{r^{k-1}},b_{r^{k}}}(X_{i}), f_{b_{r^{k-1}},b_{r^{k}}}(X_{j}) \big)} \leqslant C \sum_{j=m}^{n} \mathbb{E} \lvert f_{b_{r^{k-1}},b_{r^{k}}}(X_{j}) \rvert^{2}, \notag \\
\abs{\sum_{m \leqslant i < j \leqslant n} \mathrm{Cov}\big(f_{b_{r^{k-1}},b_{r^{k}}}^{+}(X_{i}), f_{b_{r^{k-1}},b_{r^{k}}}^{+}(X_{j}) \big)} \leqslant C \sum_{j=m}^{n} \mathbb{E} \big[f_{b_{r^{k-1}},b_{r^{k}}}^{+}(X_{j}) \big]^{2}, \label{eq:3.8} \\
\abs{\sum_{m \leqslant i < j \leqslant n} \mathrm{Cov}\big(f_{b_{r^{k-1}},b_{r^{k}}}^{-}(X_{i}), f_{b_{r^{k-1}},b_{r^{k}}}^{-}(X_{j}) \big)} \leqslant C \sum_{j=m}^{n} \mathbb{E} \big[ f_{b_{r^{k-1}},b_{r^{k}}}^{-}(X_{j}) \big]^{2} \notag
\end{align}
for all $1 \leqslant m \leqslant n$, we get
\begin{equation}\label{eq:3.9}
\begin{split}
&\Bigg\lvert \sum_{1 + h r^{k} \leqslant i < j \leqslant h r^{k} + r^{k}} H_{X_{i},X_{j}}\big(b_{r^{k-1}},b_{r^{k}} \big) \Bigg\rvert \\
&\quad = \Bigg\lvert \sum_{1 + h r^{k} \leqslant i < j \leqslant h r^{k} + r^{k}} \Big[\mathrm{Cov}\big(f_{b_{r^{k-1}},b_{r^{k}}}^{+}(X_{i}), f_{b_{r^{k-1}},b_{r^{k}}}^{+}(X_{j}) \big) + \mathrm{Cov}\big(f_{b_{r^{k-1}},b_{r^{k}}}^{+}(X_{i}), f_{b_{r^{k-1}},b_{r^{k}}}^{-}(X_{j}) \big) \bigg. \Bigg. \\
&\qquad \Bigg.\bigg. + \mathrm{Cov}\big(f_{b_{r^{k-1}},b_{r^{k}}}^{-}(X_{i}), f_{b_{r^{k-1}},b_{r^{k}}}^{+}(X_{j}) \big) + \mathrm{Cov}\big(f_{b_{r^{k-1}},b_{r^{k}}}^{-}(X_{i}), f_{b_{r^{k-1}},b_{r^{k}}}^{-}(X_{j}) \big) \Big] \Bigg\rvert \\
&\quad \leqslant \Bigg\lvert \sum_{1 + h r^{k} \leqslant i < j \leqslant h r^{k} + r^{k}} \mathrm{Cov}\big(f_{b_{r^{k-1}},b_{r^{k}}}^{+}(X_{i}), f_{b_{r^{k-1}},b_{r^{k}}}^{+}(X_{j}) \big) \Bigg\rvert \\
&\qquad + \Bigg\lvert\sum_{1 + h r^{k} \leqslant i < j \leqslant h r^{k} + r^{k}} \mathrm{Cov}\big(f_{b_{r^{k-1}},b_{r^{k}}}^{-}(X_{i}), f_{b_{r^{k-1}},b_{r^{k}}}^{-}(X_{j}) \big) \Bigg\rvert \\
&\qquad + \Bigg\lvert \sum_{1 + h r^{k} \leqslant i < j \leqslant h r^{k} + r^{k}} \left[ \mathrm{Cov}\big(f_{b_{r^{k-1}},b_{r^{k}}}^{+}(X_{i}), f_{b_{r^{k-1}},b_{r^{k}}}^{-}(X_{j}) \big) + \mathrm{Cov}\big(f_{b_{r^{k-1}},b_{r^{k}}}^{-}(X_{i}), f_{b_{r^{k-1}},b_{r^{k}}}^{+}(X_{j}) \big) \right] \Bigg\rvert \\
&\quad \leqslant C \sum_{j=1 + h r^{k}}^{h r^{k} + r^{k}} \mathbb{E} \big[ f_{b_{r^{k-1}},b_{r^{k}}}^{+}(X_{j}) \big]^{2} + C \sum_{j=1 + h r^{k}}^{h r^{k} + r^{k}} \mathbb{E} \big[ f_{b_{r^{k-1}},b_{r^{k}}}^{-}(X_{j}) \big]^{2} + 3C \sum_{j=1 + h r^{k}}^{h r^{k} + r^{k}} \mathbb{E} \lvert f_{b_{r^{k-1}},b_{r^{k}}}(X_{j}) \rvert^{2} \\
&\quad \leqslant 5C \sum_{j=1 + h r^{k}}^{h r^{k} + r^{k}} \mathbb{E} \lvert f_{b_{r^{k-1}},b_{r^{k}}}(X_{j}) \rvert^{2} \\
&\quad \leqslant 5C \sum_{j=1 + h r^{k}}^{h r^{k} + r^{k}} \left[\mathbb{E} X_{j}^{2} I_{\left\{b_{r^{k-1}} < \lvert X_{j} \rvert \leqslant b_{r^{k}} \right\}} + b_{r^{k}}^{2} \mathbb{P} \big\{\lvert X_{j} \rvert > b_{r^{k}} \big\} \right]
\end{split}
\end{equation}
because
\begin{align*}
&C \sum_{j=m}^{n} \mathbb{E} \lvert f_{b_{r^{k-1}},b_{r^{k}}}(X_{j}) \rvert^{2} \\
&\quad \geqslant \Bigg\lvert\sum_{1 + h r^{k} \leqslant i < j \leqslant h r^{k} + r^{k}} \mathrm{Cov}\big(f_{b_{r^{k-1}},b_{r^{k}}}(X_{i}), f_{b_{r^{k-1}},b_{r^{k}}}(X_{j}) \big)\Bigg\rvert \\
&\quad = \Bigg\lvert \sum_{1 + h r^{k} \leqslant i < j \leqslant h r^{k} + r^{k}} \Big[\mathrm{Cov}\big(f_{b_{r^{k-1}},b_{r^{k}}}^{+}(X_{i}), f_{b_{r^{k-1}},b_{r^{k}}}^{+}(X_{j}) \big) - \mathrm{Cov}\big(f_{b_{r^{k-1}},b_{r^{k}}}^{+}(X_{i}), f_{b_{r^{k-1}},b_{r^{k}}}^{-}(X_{j}) \big) \bigg. \Bigg. \\
&\qquad \Bigg.\bigg. - \mathrm{Cov}\big(f_{b_{r^{k-1}},b_{r^{k}}}^{-}(X_{i}), f_{b_{r^{k-1}},b_{r^{k}}}^{+}(X_{j}) \big) + \mathrm{Cov}\big(f_{b_{r^{k-1}},b_{r^{k}}}^{-}(X_{i}), f_{b_{r^{k-1}},b_{r^{k}}}^{-}(X_{j}) \big) \Big] \Bigg\rvert \\
&\quad \geqslant \Bigg\lvert \sum_{1 + h r^{k} \leqslant i < j \leqslant h r^{k} + r^{k}} \Big[\mathrm{Cov}\big(f_{b_{r^{k-1}},b_{r^{k}}}^{+}(X_{i}), f_{b_{r^{k-1}},b_{r^{k}}}^{-}(X_{j}) \big) + \mathrm{Cov}\big(f_{b_{r^{k-1}},b_{r^{k}}}^{-}(X_{i}), f_{b_{r^{k-1}},b_{r^{k}}}^{+}(X_{j}) \big) \Big]\Bigg\rvert \\
&\qquad - \Bigg\lvert \sum_{1 + h r^{k} \leqslant i < j \leqslant h r^{k} + r^{k}} \Big[\mathrm{Cov}\big(f_{b_{r^{k-1}},b_{r^{k}}}^{+}(X_{i}), f_{b_{r^{k-1}},b_{r^{k}}}^{+}(X_{j}) \big) + \mathrm{Cov}\big(f_{b_{r^{k-1}},b_{r^{k}}}^{-}(X_{i}), f_{b_{r^{k-1}},b_{r^{k}}}^{-}(X_{j}) \big) \Big] \Bigg\rvert \\
&\quad \geqslant \Bigg\lvert \sum_{1 + h r^{k} \leqslant i < j \leqslant h r^{k} + r^{k}} \Big[\mathrm{Cov}\big(f_{b_{r^{k-1}},b_{r^{k}}}^{+}(X_{i}), f_{b_{r^{k-1}},b_{r^{k}}}^{-}(X_{j}) \big) + \mathrm{Cov}\big(f_{b_{r^{k-1}},b_{r^{k}}}^{-}(X_{i}), f_{b_{r^{k-1}},b_{r^{k}}}^{+}(X_{j}) \big) \Big]\Bigg\rvert \\
&\qquad - C \sum_{j=m}^{n} \mathbb{E} \big[ f_{b_{r^{k-1}},b_{r^{k}}}^{+}(X_{j}) \big]^{2} - C \sum_{j=m}^{n} \mathbb{E} \big[ f_{b_{r^{k-1}},b_{r^{k}}}^{-}(X_{j}) \big]^{2} \\
&\quad \geqslant \Bigg\lvert \sum_{1 + h r^{k} \leqslant i < j \leqslant h r^{k} + r^{k}} \Big[\mathrm{Cov}\big(f_{b_{r^{k-1}},b_{r^{k}}}^{+}(X_{i}), f_{b_{r^{k-1}},b_{r^{k}}}^{-}(X_{j}) \big) + \mathrm{Cov}\big(f_{b_{r^{k-1}},b_{r^{k}}}^{-}(X_{i}), f_{b_{r^{k-1}},b_{r^{k}}}^{+}(X_{j}) \big) \Big]\Bigg\rvert \\
&\qquad - 2C \sum_{j=m}^{n} \mathbb{E} \lvert f_{b_{r^{k-1}},b_{r^{k}}}(X_{j}) \rvert^{2}
\end{align*}
and
\begin{align*}
\sum_{j=1 + h r^{k}}^{h r^{k} + r^{k}} \mathbb{E} \lvert f_{b_{r^{k-1}},b_{r^{k}}}(X_{j}) \rvert^{2} & = \sum_{j=1 + h r^{k}}^{h r^{k} + r^{k}} \mathbb{E} \big[ h_{b_{r^{k-1}},b_{r^{k}}}(X_{j}) \big]^{2} \\
&\leqslant \sum_{j=1 + h r^{k}}^{h r^{k} + r^{k}} \left[\mathbb{E} X_{j}^{2} I_{\left\{b_{r^{k-1}} < \lvert X_{j} \rvert \leqslant b_{r^{k}} \right\}} + b_{r^{k}}^{2} \mathbb{P} \big\{\lvert X_{j} \rvert > b_{r^{k}} \big\} \right].
\end{align*}
Therefore, by \eqref{eq:3.9} we obtain
\begin{align*}
&\sum_{h=0}^{r^{m-k+1} - 1} \left[\sum_{1 + hr^{k} \leqslant i < j \leqslant h r^{k} + r^{k}} H_{X_{i},X_{j}}(b_{r^{k-1}},b_{r^{k}}) \right]^{+} \\
&\quad \leqslant \sum_{h=0}^{r^{m-k+1} - 1} \abs{\sum_{1 + hr^{k} \leqslant i < j \leqslant h r^{k} + r^{k}} H_{X_{i},X_{j}}(b_{r^{k-1}},b_{r^{k}})}  \\
&\quad \leqslant C \sum_{h=0}^{r^{m-k+1} - 1} \sum_{j=1 + h r^{k}}^{h r^{k} + r^{k}} \left[\mathbb{E} X_{j}^{2} I_{\left\{b_{r^{k-1}} < \lvert X_{j} \rvert \leqslant b_{r^{k}} \right\}} + b_{r^{k}}^{2} \mathbb{P} \big\{\lvert X_{j} \rvert > b_{r^{k}} \big\} \right] \\
&\quad = C \sum_{j=1}^{r^{m+1}} \left[\mathbb{E} X_{j}^{2} I_{\left\{b_{r^{k-1}} < \lvert X_{j} \rvert \leqslant b_{r^{k}} \right\}} + b_{r^{k}}^{2} \mathbb{P} \big\{\lvert X_{j} \rvert > b_{r^{k}} \big\} \right].
\end{align*}
Hence, by assuming \eqref{eq:3.8} in Theorem~\ref{thr:3.1} one can withdraw condition (g).

Furthermore, it is worthy to note that both \eqref{eq:3.7} and \eqref{eq:3.8} are assured by a familiar moment inequality.  Let $\varphi$ be an arbitrary nondecreasing real-valued function defined on whole real line. As a matter of fact, all sequences of random variables $\{X_{n}, \, n \geqslant 1 \}$ satisfying $\mathbb{E} [\varphi(X_{n})]^{2} < \infty$ for each $n \geqslant 1$, and for which there exists a constant $C > 0$ such that
\begin{equation}\label{eq:3.10}
\mathbb{E} \Bigg\{\sum_{j=m}^{n} \left[\varphi(X_{j}) - \mathbb{E} \varphi(X_{j}) \right]\Bigg\}^{2} \leqslant C \sum_{j=m}^{n} \mathbb{E} \left[\varphi(X_{j}) - \mathbb{E} \varphi(X_{j}) \right]^{2}
\end{equation}
for all $1 \leqslant m \leqslant n$, fulfill \eqref{eq:3.7} and \eqref{eq:3.8}. Indeed, supposing \eqref{eq:3.10} it follows
\begin{align*}
&2\Bigg\lvert\sum_{m \leqslant i < j \leqslant n} \mathrm{Cov}(\varphi(X_{i}),\varphi(X_{j})) \Bigg\rvert - \sum_{j=m}^{n} \mathbb{E} \left[\varphi(X_{j}) \right]^{2} \\
&\quad \leqslant 2\Bigg\lvert\sum_{m \leqslant i < j \leqslant n} \mathrm{Cov}(\varphi(X_{i}),\varphi(X_{j})) \Bigg\rvert - \sum_{j=m}^{n} \mathbb{V} \left[\varphi(X_{j}) \right] \\
&\quad = 2\Bigg\lvert\sum_{m \leqslant i < j \leqslant n} \mathbb{E}[\varphi(X_{i}) - \mathbb{E} \varphi(X_{i})][\varphi(X_{j}) - \mathbb{E} \varphi(X_{j})] \Bigg\rvert - \sum_{j=m}^{n} \mathbb{E} \left[\varphi(X_{j}) - \mathbb{E} \varphi(X_{j}) \right]^{2} \\
&\quad \leqslant \Bigg\lvert\sum_{j=m}^{n} \mathbb{E} \left[\varphi(X_{j}) - \mathbb{E} \varphi(X_{j}) \right]^{2} + 2 \sum_{m \leqslant i < j \leqslant n} \mathbb{E}[\varphi(X_{i}) - \mathbb{E} \varphi(X_{i})][\varphi(X_{j}) - \mathbb{E} \varphi(X_{j})] \Bigg\rvert \\
&\quad = \mathbb{E} \Bigg\{\sum_{j=m}^{n} \left[\varphi(X_{j}) - \mathbb{E} \varphi(X_{j}) \right] \Bigg\}^{2} \\
&\quad \leqslant C \sum_{j=m}^{n} \mathbb{E} \left[\varphi(X_{j}) - \mathbb{E} \varphi(X_{j}) \right]^{2} \\
&\quad \leqslant C \sum_{j=m}^{n} \mathbb{E} \left[\varphi(X_{j}) \right]^{2}.
\end{align*}

All in all, excluding \eqref{eq:3.5} and \eqref{eq:3.6}, every steps in proof of Corollary~\ref{cor:3.1} remain valid for sequences of random variables $\{X_{n}, \, n \geqslant 1 \}$ stochastically dominated by a random variable $X \in \mathscr{L}_{p}$, $1 \leqslant p < 2$ satisfying \eqref{eq:3.7} and \eqref{eq:3.8} with $b_{n} = n^{1/p}$. Thereby, we have the ensuing:

\begin{corollary}\label{cor:3.2}
Let $1 \leqslant p < 2$, and $\{X_{n}, \, n \geqslant 1 \}$ be a sequence of random variables stochastically dominated by a random variable $X \in \mathscr{L}_{p}$. If there is a constant $C > 0$ such that
\begin{equation}\label{eq:3.11}
\Bigg\lvert\sum_{m \leqslant i < j \leqslant n} \mathrm{Cov}\big(\varphi(X_{i}),\varphi(X_{j}) \big) \Bigg\rvert \leqslant C \sum_{j=m}^{n} \mathbb{E} [\varphi(X_{j})]^{2}
\end{equation}
for all $1 \leqslant m \leqslant n$ and any nondecreasing real-valued function $ \varphi$ defined on whole real line, then $\sum_{k = 1}^{n} (X_{k} - \mathbb{E} \, X_{k})/n^{1/p} \overset{\textnormal{a.s.}}{\longrightarrow} 0$.
\end{corollary}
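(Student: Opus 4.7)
The plan is to reduce Corollary~\ref{cor:3.2} to Corollary~\ref{cor:3.1} via the groundwork laid in Subsection~\ref{subsec:3.1}. First I would verify that the single hypothesis \eqref{eq:3.11} already implies both \eqref{eq:3.7} and \eqref{eq:3.8}. Inequality \eqref{eq:3.7} is immediate on taking $\varphi = g_{b_{r^{k-1}}}$, since the truncation $g_{L}$ is nondecreasing on $\mathbb{R}$. For \eqref{eq:3.8}, the key observation is that $f_{L,K}(t) = g_{K}(t) - g_{L}(t)$ is piecewise linear and nondecreasing on $\mathbb{R}$ (it is $L-K$ for $t \leqslant -K$, $L+t$ for $-K \leqslant t \leqslant -L$, $0$ for $-L \leqslant t \leqslant L$, $t-L$ for $L \leqslant t \leqslant K$, and $K-L$ for $t \geqslant K$); consequently $f_{L,K}^{+}$ is nondecreasing, while $f_{L,K}^{-}$ is nonincreasing, so that $-f_{L,K}^{-}$ is nondecreasing. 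Applying \eqref{eq:3.11} with $\varphi = f_{L,K}$, $\varphi = f_{L,K}^{+}$, and $\varphi = -f_{L,K}^{-}$, and using $\mathrm{Cov}(-U,-V) = \mathrm{Cov}(U,V)$ together with $\mathbb{E}(-U)^{2} = \mathbb{E} U^{2}$ for the last choice, delivers the three covariance bounds of \eqref{eq:3.8}.

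Once \eqref{eq:3.7} and \eqref{eq:3.8} are secured, the computations in Subsection~\ref{subsec:3.1} show that conditions (f) and (g) of Theorem~\ref{thr:3.1} are absorbed into the bounds already controlled by (d) and (e). It then suffices to verify (a), (b'), (c), (d), (e) of Theorem~\ref{thr:3.1} with the choices $c_{n} = 1/n$, $b_{n} = n^{1/p}$, and $a_{n,k} = r^{n/\alpha + (1/p - 1/\alpha)k}$ for some fixed $p < \alpha < 2$. But these verifications are precisely the ones carried out in the proof of Corollary~\ref{cor:3.1}, and they rely only on stochastic domination and on $X \in \mathscr{L}_{p}$, with no appeal whatsoever to any dependence structure.

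Combining these ingredients yields $\sum_{n=1}^{\infty} n^{-1} \mathbb{P}\{\max_{1 \leqslant j \leqslant n} \lvert \sum_{k=1}^{j}(X_{k} - \mathbb{E} X_{k}) \rvert > \varepsilon n^{1/p}\} < \infty$ for every $\varepsilon > 0$. A final application of Theorem 2.2 of \cite{Hu16}, exactly as in the closing step of the proof of Corollary~\ref{cor:3.1}, upgrades this tail summability to the almost-sure convergence $n^{-1/p} \sum_{k=1}^{n} (X_{k} - \mathbb{E} X_{k}) \to 0$, which is the desired conclusion.

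The only slightly delicate point is the monotonicity analysis of the four pieces of $f_{L,K}$: since $f_{L,K}^{-}$ is nonincreasing rather than nondecreasing, the covariance bound on it has to be obtained indirectly, by applying \eqref{eq:3.11} to $-f_{L,K}^{-}$ and invoking sign-invariance of covariance. Everything else is bookkeeping---stringing together \eqref{eq:3.7}, \eqref{eq:3.8}, and the explicit choices of $b_{n}$, $c_{n}$, $a_{n,k}$ inherited from the proof of Corollary~\ref{cor:3.1}.
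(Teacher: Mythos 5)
Your proposal is correct and follows essentially the same route as the paper, which proves Corollary~\ref{cor:3.2} by observing that \eqref{eq:3.11} yields \eqref{eq:3.7} and \eqref{eq:3.8}, that Subsection~\ref{subsec:3.1} then disposes of conditions (f) and (g), and that the remaining verifications are those already done in the proof of Corollary~\ref{cor:3.1}. Your explicit monotonicity analysis (in particular applying \eqref{eq:3.11} to $-f_{L,K}^{-}$, which is the nondecreasing piece) is a welcome clarification of a point the paper leaves implicit.
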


\begin{remark}
Of course, Corollary~\ref{cor:3.2} is still true under condition \eqref{eq:3.11} only for $\varphi(t) = g_{r^{(k - 1)/p}}(t)$, $\varphi(t) = f_{r^{(k - 1)/p},r^{k/p}}(t)$, $\varphi(t) = f_{r^{(k - 1)/p},r^{k/p}}^{+}(t)$, and $\varphi(t) = -f_{r^{(k - 1)/p},r^{k/p}}^{+}(t)$ with some (fixed) integer $r > 1$. 
\end{remark}

\subsection{SLLN for quadrant dependent random variables}

\indent

The notion of quadrant dependence was introduced by Lehmann in \cite{Lehmann66}. A sequence $\{X_{n}, \, n \geqslant 1 \}$ of random variables is said to be \emph{pairwise positively quadrant dependent} (pairwise PQD) if
\begin{equation*}
\mathbb{P} \left\{X_{k} \leqslant x_{k}, X_{j} \leqslant x_{j}  \right\} - \mathbb{P} \left\{X_{k} \leqslant x_{k} \right\} \mathbb{P} \left\{X_{j} \leqslant x_{j}  \right\} \geqslant 0
\end{equation*}
for all reals $x_{k}, x_{j}$ and all positive integers $k,j$ such that $k \neq j$. A sequence $\{X_{n}, \, n \geqslant 1 \}$ of random variables is said to be \emph{pairwise negatively quadrant dependent} (pairwise NQD) if
\begin{equation*}
\mathbb{P} \left\{X_{k} \leqslant x_{k}, X_{j} \leqslant x_{j}  \right\} - \mathbb{P} \left\{X_{k} \leqslant x_{k} \right\} \mathbb{P} \left\{X_{j} \leqslant x_{j}  \right\} \leqslant 0
\end{equation*}
for all reals $x_{k}, x_{j}$ and all positive integers $k,j$ such that $k \neq j$. The following is a SLLN for sequences of pairwise PQD random variables.

A remarkable example of the above statistical dependence concept can be found in reliability theory, where most bivariate distributions are positively quadrant dependent and the disregarding of this assumption lead to underestimation/overestimation of the system reliability \cite{Hutchinson90}.

The statement below is a strong limit theorem for sequences of pairwise PQD random variables under both sharpen norming constants and moment condition.

\begin{corollary}\label{cor:3.3}
Let $1 < p < 2$ and $\{X_{n}, \, n \geqslant 1 \}$ be a sequence of pairwise PQD random variables stochastically dominated by a random variable $X \in \mathscr{L}_{p}$. If
\begin{equation*}
\sum_{1 \leqslant i < j \leqslant \infty} \sum_{n=j}^{\infty} n^{-1 - 2/\alpha} G_{X_{i},X_{j}}\big(n^{1/p} \big) < \infty
\end{equation*}
for some $p < \alpha < 2$, then $\sum_{k = 1}^{n} (X_{k} - \mathbb{E} \, X_{k})/n^{1/p} \overset{\textnormal{a.s.}}{\longrightarrow} 0$.
\end{corollary}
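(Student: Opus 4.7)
The plan is to invoke Corollary~\ref{cor:3.1} directly, reducing the problem to verifying its summability condition \eqref{eq:3.2} from the hypothesis of the present corollary together with pairwise PQD. The key consequence of pairwise PQD is the pointwise inequality $\Delta_{X_i,X_j}(u,v) \geqslant 0$ for all $u,v \in \mathbb{R}$, which via the integral representations \eqref{eq:2.1}--\eqref{eq:2.2} allows both $G^+$ and $H^+$ to be controlled by $G$ alone.

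Concretely, $\Delta \geqslant 0$ makes $G_{X_i,X_j}(L) \geqslant 0$ (so $G^+ = G$) and $L \mapsto G_{X_i,X_j}(L)$ nondecreasing; moreover the two nonpositive cross-integrals in \eqref{eq:2.2} can be dropped to give $H_{X_i,X_j}(L,K) \leqslant G_{X_i,X_j}(K)$, whence $H^+_{X_i,X_j}(L,K) \leqslant G_{X_i,X_j}(K)$. Applied with $L = r^{(k-1)/p}$ and $K = r^{k/p}$, this bounds the bracket in \eqref{eq:3.2} by $2\, G_{X_i,X_j}(r^{k/p})$, so it remains to prove
\[
\sum_{1 \leqslant i < j < \infty} \sum_{k=1}^{\infty} r^{(2/\alpha - 2/p)k - 2(k \vee \log_r j)/\alpha}\, G_{X_i,X_j}(r^{k/p}) < \infty.
\]

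For each pair $(i,j)$ I split the inner sum at $k_0 := \lceil \log_r j \rceil$. When $k \geqslant k_0$ the weight collapses to $r^{-2k/p}$, which since $p < \alpha$ is at most $r^{-2k/\alpha}$; matching this to a dyadic block $n \in [r^k, r^{k+1})$ and using monotonicity of $G$ yields the majorant $C \sum_{n=r^k}^{r^{k+1}-1} n^{-1-2/\alpha}\, G_{X_i,X_j}(n^{1/p})$, which telescopes into a tail of $\sum_{n \geqslant j} n^{-1-2/\alpha}\, G_{X_i,X_j}(n^{1/p})$. When $1 \leqslant k < k_0$ the weight equals $r^{(2/\alpha - 2/p)k}\, j^{-2/\alpha}$; since $2/\alpha - 2/p < 0$ the geometric series in $k$ is summable, and bounding $G_{X_i,X_j}(r^{k/p}) \leqslant G_{X_i,X_j}(j^{1/p})$ gives a majorant $C\, j^{-2/\alpha}\, G_{X_i,X_j}(j^{1/p})$, which in turn is comparable with $\sum_{n = j}^{2j-1} n^{-1-2/\alpha}\, G_{X_i,X_j}(n^{1/p})$. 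Summing over $i<j$ absorbs both parts into the hypothesis of the corollary, and Corollary~\ref{cor:3.1} delivers the conclusion.

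The main technical obstacle is the bound $H^+ \leqslant G$: because $h_{L,K}$ is V-shaped and not monotone on the real line, the standard covariance inequality for PQD random variables does not apply, and one must instead inspect the sign of each of the four integrals in \eqref{eq:2.2} and discard the two nonpositive contributions. The remainder of the argument is routine bookkeeping with geometric series and dyadic blocks.
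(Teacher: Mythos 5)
Your proposal is correct and follows essentially the same route as the paper's proof: reduce to Corollary~\ref{cor:3.1}, use $\Delta_{X_i,X_j}\geqslant 0$ (pairwise PQD) to make $G$ nonnegative and nondecreasing in $L$ and to dominate $G^{+}_{X_i,X_j}(r^{(k-1)/p})+H^{+}_{X_i,X_j}(r^{(k-1)/p},r^{k/p})$ by a multiple of $G_{X_i,X_j}(r^{k/p})$ (the paper gets the constant $1$ by bounding the sum jointly, you get $2$ by bounding each term; this is immaterial), and then split the $k$-sum at $\log_r j$. The only divergence is in the regime $k\geqslant \log_r j$, where the paper invokes Lemma~4 of Louhichi and the integral $\int_{j^{1/p}}^{\infty} t^{-3}G_{X_i,X_j}(t)\,\mathrm{d}t$ before discretizing back to the hypothesis series, whereas you use the elementary bound $r^{-2k/p}\leqslant r^{-2k/\alpha}$ together with monotonicity of $G$ and a dyadic block comparison --- a harmless, slightly more self-contained shortcut that avoids the external lemma.
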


\begin{proof}
Since $\Delta_{X_{i},X_{j}}(u,v) \geqslant 0$ for every $i \neq j$ and all reals $u,v$, we have
\begin{equation*}
G_{X_{i},X_{j}} \big(r^{k/p} \big) \geqslant G_{X_{i},X_{j}} \big(r^{(k - 1)/p} \big) \geqslant 0
\end{equation*}
which leads to
\begin{equation*}
G_{X_{i},X_{j}}^{+} \big(r^{(k - 1)/p} \big) + H_{X_{i},X_{j}}^{+} \big(r^{(k - 1)/p}, r^{k/p} \big) \leqslant G_{X_{i},X_{j}} \big(r^{k/p} \big).
\end{equation*}
From Lemma 4 of \cite{Louhichi00}, it follows
\begin{align}
&\sum_{1 \leqslant i < j \leqslant \infty} \sum_{k=1}^{\infty} r^{(2/\alpha - 2/p)k - 2(k \vee \log_{r} j)/\alpha} \left[G_{X_{i},X_{j}}^{+} \big(r^{(k - 1)/p} \big) + H_{X_{i},X_{j}}^{+} \big(r^{(k - 1)/p}, r^{k/p} \big) \right] \notag \\
&\quad \leqslant \sum_{1 \leqslant i < j \leqslant \infty} \sum_{k=1}^{\infty} r^{(2/\alpha - 2/p)k - 2(k \vee \log_{r} j)/\alpha} G_{X_{i},X_{j}} \big(r^{k/p} \big) \notag \\
&\quad = \sum_{1 \leqslant i < j \leqslant \infty} \sum_{k=1}^{\infty} r^{(2/\alpha - 2/p)k - 2(k \vee \log_{r} j)/\alpha} I_{\left\{r^{k} < j \right\}} G_{X_{i},X_{j}} \big(r^{k/p} \big) \notag \\
&\qquad + \sum_{1 \leqslant i < j \leqslant \infty} \sum_{k=1}^{\infty} r^{(2/\alpha - 2/p)k - 2(k \vee \log_{r} j)/\alpha} I_{\left\{r^{k} \geqslant j \right\}} G_{X_{i},X_{j}} \big(r^{k/p} \big) \notag \\
&\quad = \sum_{1 \leqslant i < j \leqslant \infty} j^{-2/\alpha} \sum_{k=1}^{\infty} r^{(2/\alpha - 2/p)k}  I_{\left\{r^{k} < j \right\}} G_{X_{i},X_{j}} \big(r^{k/p} \big) \label{eq:3.12} \\
&\qquad + \sum_{1 \leqslant i < j \leqslant \infty} \int_{-\infty}^{\infty} \int_{-\infty}^{\infty} \sum_{k=1}^{\infty} r^{-2k/p} I_{\left\{k \geqslant \log_{r}(\lvert u \rvert^{p} \vee \lvert v \rvert^{p} \vee j) \right\}} \Delta_{X_{i},X_{j}}(u,v) \, \mathrm{d}u \mathrm{d}v \notag \\
&\quad \leqslant \sum_{1 \leqslant i < j \leqslant \infty} j^{-2/\alpha} \sum_{k=1}^{\infty} r^{(2/\alpha - 2/p)k} G_{X_{i},X_{j}} \big(j^{1/p} \big) \notag \\
&\qquad + \frac{1}{1 - r^{-2/p}} \sum_{1 \leqslant i < j \leqslant \infty} \int_{-\infty}^{\infty} \int_{-\infty}^{\infty} (\lvert u \rvert^{p} \vee \lvert v \rvert^{p} \vee j)^{-2/p} \Delta_{X_{i},X_{j}}(u,v) \, \mathrm{d}u \mathrm{d}v \notag \\
&\quad = \frac{r^{2/\alpha - 2/p}}{1 - r^{2/\alpha - 2/p}} \sum_{1 \leqslant i < j \leqslant \infty} j^{-2/\alpha} G_{X_{i},X_{j}} \big(j^{1/p} \big) + \frac{2}{1 - r^{-2/p}} \sum_{1 \leqslant i < j \leqslant \infty} \int_{j^{1/p}}^{\infty} t^{-3} G_{X_{i},X_{j}}(t) \, \mathrm{d}t. \notag
\end{align}
Since there are positive constants $C_{1},C_{2}$ such that
\begin{equation*}
C_{1} n^{-1-2/p}G_{X_{i},X_{j}}\big(n^{1/p} \big) \leqslant \int_{n^{1/p}}^{(n+1)^{1/p}} t^{-3} G_{X_{i},X_{j}}(t) \, \mathrm{d}t \leqslant C_{2} (n + 1)^{-1-2/p}G_{X_{i},X_{j}}\big((n + 1)^{1/p} \big)
\end{equation*}
for each $n \geqslant 1$, we have
\begin{equation}\label{eq:3.13}
\begin{split}
\sum_{1 \leqslant i < j \leqslant \infty} \int_{j^{1/p}}^{\infty} t^{-3} G_{X_{i},X_{j}}(t) \, \mathrm{d}t &\leqslant C_{2} \sum_{1 \leqslant i < j \leqslant \infty} \sum_{n=j}^{\infty} n^{-1 - 2/p} G_{X_{i},X_{j}}\big(n^{1/p} \big) \\
&\leqslant C_{2} \sum_{1 \leqslant i < j \leqslant \infty} \sum_{n=j}^{\infty} n^{-1 - 2/\alpha} G_{X_{i},X_{j}}\big(n^{1/p} \big) < \infty.
\end{split}
\end{equation}
Moreover,
\begin{equation}\label{eq:3.14}
\sum_{1 \leqslant i < j \leqslant \infty} j^{-2/\alpha} G_{X_{i},X_{j}}\big(j^{1/p} \big) \leqslant C(\alpha) \sum_{1 \leqslant i < j \leqslant \infty} \sum_{n=j}^{\infty} n^{-1 - 2/\alpha} G_{X_{i},X_{j}}\big(n^{1/p} \big) < \infty.
\end{equation}
Therefore, \eqref{eq:3.12}, \eqref{eq:3.13}, \eqref{eq:3.14} imply condition \eqref{eq:3.2} and the thesis is established by employing Corollary~\ref{cor:3.1}.
\end{proof}

Furthermore, in light of previous subsection~\ref{subsec:3.1}, among the families of random variables for which \eqref{eq:3.10} holds, one can find well-known dependence structures, namely, sequences of pairwise NQD random variables or sequences of extended negatively dependent random variables (for definition and basic properties, see \cite{Lehmann66} and \cite{Liu09}, respectively); with regard to the corresponding proof, we refer \cite{Shen11} and \cite{Wu06}.

\section*{Acknowledgements}

This work is funded by national funds through the FCT - Fundação para a Ciência e a Tecnologia, I.P., under the scope of projects UIDB/04035/2020 (GeoBioTec)

\end{document}